\documentclass[11pt]{article}
\usepackage{etex}
\usepackage{geometry}\geometry{left=2.5cm,right=2.5cm,top=2.5cm,bottom=2.5cm}
\usepackage{amsmath}
\usepackage{amsfonts}
\usepackage{amssymb}
\usepackage{amsthm}
\usepackage{mathtools}
\usepackage{dsfont}
\usepackage{natbib}
\setcitestyle{aysep={}}
\usepackage{hyperref}
\usepackage{breakcites}
\usepackage{cleveref}
\usepackage{booktabs}
\usepackage{url}
\usepackage[bb=boondox]{mathalfa}
\usepackage{fancybox}
\usepackage{multirow}
\usepackage{soul}

\usepackage[utf8]{inputenc}
\usepackage[T1]{fontenc}
\usepackage{lmodern}

\usepackage[dvipsnames]{xcolor}
\usepackage{tikz}
\usetikzlibrary{arrows,shapes,positioning}
\usepackage{tikz-cd}
\usepackage[font={small,it}]{caption}
\usepackage[ruled,noline]{algorithm2e}
\usepackage[normalem]{ulem}

\usepackage[colorinlistoftodos,prependcaption]{todonotes}

\newtheorem{theorem}{Theorem}
\newtheorem{lemma}{Lemma}

\theoremstyle{definition}
\newtheorem{definition}{Definition}

\theoremstyle{remark}
\newtheorem{remark}{Remark}
\newtheorem{example}{Example}

\newcommand{\Mod}[1]{\ (\mathrm{mod}\ #1)}

\newcommand{\N}{\mathcal{N}}
\newcommand{\M}{\mathcal{M}}
\renewcommand{\S}{\mathcal{S}}
\newcommand{\C}{\mathcal{C}}

\newcommand{\R}{\mathcal{R}}

\newcommand{\tN}{\tilde{\mathcal{N}}}
\newcommand{\tM}{\tilde{\mathcal{M}}}
\newcommand{\tS}{\tilde{\mathcal{S}}}
\newcommand{\tC}{\tilde{\mathcal{C}}}
\newcommand{\tR}{\tilde{\mathcal{R}}}

\newcommand{\mA}{\mathbf{A}}
\newcommand{\mY}{\mathbf{Y}}
\newcommand{\mI}{\mathbf{I}}
\newcommand{\mP}{\mathbf{P}}
\newcommand{\mU}{\mathbf{U}}
\newcommand{\mV}{\mathbf{V}}
\newcommand{\mGamma}{\mathbf{\Gamma}}
\newcommand{\mUpsilon}{\mathbf{\Upsilon}}
\newcommand{\vx}{\mathbf{x}}

\newcommand{\AI}{XD}
\newcommand{\AII}{X}
\newcommand{\AIII}{XT}
\newcommand{\AIV}{X_p}
\newcommand{\AV}{Y}
\newcommand{\AVI}{X_pY}
\newcommand{\AVII}{Y_p}
\newcommand{\AVIII}{XTY_p}
\newcommand{\AIX}{XDY_p}

\title{Network translation and steady state properties\\of chemical reaction systems}
\author{Elisa Tonello \\
        School of Mathematical Sciences \\
        University of Nottingham \\
        Nottingham, NG7 2RD \\
        \href{mailto:elisa.tonello@nottingham.ac.uk}{elisa.tonello@nottingham.ac.uk}
        \and
        Matthew D. Johnston \\
        Department of Mathematics \\
        San Jose State University \\
        San Jose, CA, 95192 USA \\
        \href{mailto:matthew.johnston@sjsu.edu}{matthew.johnston@sjsu.edu}
}
\date{}

\begin{document}

\maketitle

\begin{abstract}
Network translation has recently been used to establish steady state properties of mass action systems by corresponding the given system to a generalized one which is either dynamically or steady state equivalent. In this work we further use network translation to identify network structures which give rise to the well-studied property of absolute concentration robustness in the corresponding mass action systems. In addition to establishing the capacity for absolute concentration robustness, we show that network translation can often provide a method for deriving the steady state value of the robust species. We furthermore present a MILP algorithm for the identification of translated chemical reaction networks that improves on previous approaches, allowing for easier application of the theory.
\end{abstract}

\tableofcontents

\section{Introduction}

Biochemical systems are often modeled as graphs of chemical reactions that convert reactant species to product species.
Under some reasonable assumptions on the kinetics, these networks are studied mathematically via systems of ordinary differential equations (ODEs).
A particularly common kinetic assumption is that of \emph{mass action}, whereby the rate of each reaction takes the form of a monomial with support on the reactant species. The study of the resulting mass action systems is very common in systems biology, but is made challenging by high dimensionality, abundant nonlinearities, and parameter uncertainty.

One topic of significant recent interest within the study of mass action systems has been \emph{absolute concentration robustness} (ACR). Robustness in key substrate concentrations in fluctuating environments has been observed experimentally in \cite{Alon1999} and \cite{Shinar2007,Shinar2009-2} and is considered an important feature of homeostasis in biochemical regulatory systems. Mathematically, a mass action system is said to have ACR in species $A$ if the concentration $x_A$ takes the same value at every positive steady state of the corresponding mass action system. \cite{ShinarFeinberg2010Robustness} established a simple structural criterion for the identification of robustness in networks of deficiency one. \cite{neigenfind2013relation} suggested combining invariant flux ratios and invariant complex ratios for the identification of robustness, while a constructive linear method for generating polynomial invariants, which can reveal ACR, was introduced by \cite{karp2012complex}. ACR networks have also been studied recently in a stochastic modeling framework \citep{A-E-J,Enciso2016,Anderson2016}.

In this paper, we present a graphical approach for affirming ACR through novel application of the theories of generalized mass action systems and network translation \citep{muller2012generalized,johnston2014translated}. Like a traditional reaction network, a generalized network consists of vertices corresponding to complexes and arrows corresponding to reactions. However, in a generalized network two complexes are assigned to each vertex, a \emph{stoichiometric complex} (unbracketed) and a \emph{kinetic complex} (bracketed), and these complexes govern different portions of the associated system of ODEs. The process of network translation relates a network to a generalized one and can be visualized by adding a linear combination of species, called a \emph{translation complex}, to the left-hand and right-hand sides of individual reactions.
Generalized mass action representations are also natural in the context of population dynamics and epidemiology. For example, the transition represented as $S \to I$ in a Susceptible-Infected model typically occurs at non-mass-action rate $\beta x_S x_I$ (for further graphical examples, see \cite{brauer2012}).

To illustrate how network translation may reveal ACR, consider the following network:
\begin{equation}\label{eq:proper_acr_intro}
      \begin{tikzcd}[row sep=small, column sep=small]
          & A + C & C \arrow[rd, "r_3"] & & \\
         A + B \arrow[ru, "r_1"] \arrow[rd, "r_2"'] & & & A \arrow[r, "r_5"] & B. \\
        & A + D & D \arrow[ru, "r_4"'] & &
      \end{tikzcd}
\end{equation}
The mass action system associated to \eqref{eq:proper_acr_intro} can be shown to have ACR in $B$ through analysis of the steady state equations, or of the complex-linear invariants~\citep{karp2012complex}; however, the structural results of \cite{ShinarFeinberg2010Robustness} do not directly identify ACR in any species.
A different graphical representation of the same network can on the other hand reveal the property: consider adding the translation complex $A$ to the reactant and product complex of reactions $r_3$, $r_4$, and $r_5$. This \emph{translation scheme} generates the following generalized network:
    \begin{equation}\label{eq:proper_acr_intro_translated}
      \begin{tikzcd}[row sep=small, column sep=small]
        & & \mbox{\ovalbox{$\begin{array}{c} A+C \\ (C) \end{array}$}} \arrow[rd, "\tilde{r}_3"] & & & \\
& \mbox{\ovalbox{$\begin{array}{c} A+B \\ (A+B) \end{array}$}} \arrow[ru, "\tilde{r}_1"] \arrow[rd, "\tilde{r}_2"'] & & \mbox{\ovalbox{$\begin{array}{c} 2A \\ (A) \end{array}$}}. \arrow[ll, "\tilde{r}_5"] \\
        & & \mbox{\ovalbox{$\begin{array}{c} A+D \\ (D) \end{array}$}} \arrow[ru, "\tilde{r}_4"'] & & &
      \end{tikzcd}
    \end{equation}
\noindent When generalized networks arise from network translation, as in \eqref{eq:proper_acr_intro_translated}, the stoichiometric complexes correspond to the original complexes adjusted by the translation complex (in this case, we have added $A$ to $r_3$, $r_4$, and $r_5$), and the kinetic complexes correspond to the original source complexes \citep{johnston2014translated}.
The first result of this paper (Theorem \ref{thm:robustness_translation}) guarantees that every pair of source complexes in \eqref{eq:proper_acr_intro} have a robust ratio because they are translated to a common linkage class in \eqref{eq:proper_acr_intro_translated}.
We can establish ACR of $x_B$ for the mass action system corresponding to \eqref{eq:proper_acr_intro} by noting that $A+B$ and $A$ appear as kinetic complexes in the same linkage class of \eqref{eq:proper_acr_intro_translated}.
 Theorem \ref{thm:resolvability_translation} allows in addition the explicit computation of the steady state value for $x_B$.

The present work builds on the theory developed in \cite{johnston2014translated}. We show how network translation can be used to establish ACR,
  and clarify how robustness properties can be interpreted in terms of the graphical tool given by \emph{tree constants},
  outside of the standard deficiency zero-weakly reversible setting.
In addition, we present a mixed-integer linear program (MILP) for constructing network translations.
For instance, we computationally determine the translation scheme required to correspond \eqref{eq:proper_acr_intro} to \eqref{eq:proper_acr_intro_translated}.
The algorithm realizes the objective first stated in \cite{johnston2014translated} of using a network's elementary modes to build a network translation.
In contrast to the MILP algorithm presented in \cite{johnston2015computational},
our algorithm does not depend on a priori knowledge of either the stoichiometric complexes in the translation or the rate constants of each reaction.

The paper is structured as follows.
In Section \ref{sec:defs}, we present the relevant background about chemical reaction networks and robustness. In Section~\ref{sec:translated}, we introduce translated chemical reaction networks
and illustrate how these can be used to correspond mass action systems to generalized mass action systems with lower deficiency, as well as for the identification of robustness properties.
In Section~\ref{sec:translation_algo}, we present a new computational approach to the identification of translated reaction networks. In Section \ref{sec:conclusions}, we summarize our results and present some avenues for future research.

\section{Background}\label{sec:defs}

In this section we introduce the relevant background on chemical reaction networks and the corresponding dynamical modeling framework. The terminology used in this paper is inspired significantly by \emph{Chemical Reaction Network Theory} (CRNT), a framework which has proved very effective in recent years in analyzing the steady states of mass action systems
\citep{feinberg1972complex,Feinberg1979lectures,feinberg1987chemical,feinberg1988chemical,feinberg1995existence,feinberg1995multiple,horn1972necessary,horn1972general,ShinarFeinberg2010Robustness}.

\subsection{Chemical reaction networks}
\label{sec:crn}

The set of species of a chemical reaction network is denoted $\S = \{ X_1, \ldots, X_n \}$.
A finite integer linear combination of the species of the form $y = \sum_{i=1}^n y_{i} X_i$ is called a \emph{complex},
and the non-negative integers $y_{i}$ are called \emph{stoichiometric coefficients}.
The set of complexes of the chemical reaction network is denoted $\C = \{y^1, \dots, y^c\}$, where $c$ is the cardinality of $\C$.
It will also sometimes be useful to identify a complex $y$ with the vector of its
stoichiometric coefficients, e.g. $y = {(y_1, \ldots, y_n)} \in \mathbb{N}_0^n$.

A \emph{reaction} $r$ is an ordered pair of complexes ($y$, $y'$) $\in \mathcal{C} \times \mathcal{C}$, with $y \neq y'$, and is denoted as $r: y \rightarrow y'$ or $y \xrightarrow{r} y'$. The complex $y$ is called the \emph{reactant} of the reaction while $y'$ the \emph{product} of the reaction. The finite set of reactions of a network is denoted $\R = \{ r_1, \ldots, r_m \}$.
We will denote a chemical reaction network as a triple $\N = (\S, \C, \R)$ and furthermore assume that every species appears in at least one complex, and every complex appears in at least one reaction.

The vector $y' - y \in \mathbb{Z}^n$
 is called the \emph{reaction vector} of the reaction $y \to y'$. The \emph{stoichiometric subspace} of a chemical reaction network is defined as
\[S = \mbox{span}\{y' - y \; |\ r: y \rightarrow y',\ r \in \R\}.\]
 The dimension $s = \mbox{dim}(S)$ is called the $\emph{rank}$ of the reaction network.

The pair $(\C, \R)$ identifies a directed graph which we will call the \emph{graph of complexes} of the chemical reaction network.
The strongly connected components of this graph are the \emph{strong linkage classes} of the reaction network.
The \emph{linkage classes} $\mathcal{L}_\theta$, $\theta = 1, \dots, l$, of the reaction network are the connected components of the undirected graph obtained from the graph of complexes by ignoring the direction of the reactions.
A chemical reaction network is said to be \emph{weakly reversible} if its linkage classes coincide with its strong linkage classes.
A strong linkage class is said to be terminal if it admits no outgoing edges. The \emph{deficiency} of a chemical reaction network is a nonnegative integer defined as $\delta = c - l - s$. The deficiency has been the study of significant research of the past 45 years \citep{feinberg1972complex,feinberg1987chemical,horn1972necessary,ShinarFeinberg2010Robustness}.

\begin{example}
\label{ex:example1}
Consider the following network, taken from \cite{ShinarFeinberg2010Robustness}:
\begin{equation}\label{eq:network1}
  \begin{tikzcd}
    A+B \arrow[r, "r_1"] & 2B, \\[-0.2in]
    B \arrow[r, "r_2"] & A.
  \end{tikzcd}
\end{equation}
This network has the species set $\S = \{ A, B\}$, the complex set $\C = \{
A+B, 2B, B, A \}$, and the reaction set $\R = \{ A + B \xrightarrow{} 2B, B
\xrightarrow{} A \}$.
The linkage classes are $\{A + B, 2B\}$ and $\{B, A \}$, while the strong
linkage classes are $\{A+B \}$, $\{ 2B\}$, $\{ B \}$, and $\{A \}$.
The network is not weakly reversible and only the strong linkage classes $\{2B\}$ and $\{A \}$ are terminal.
If we label $y^1 = A+B$, $y^2 = 2B$, $y^3 = B$, and $y^4 = A$, then the reaction vectors are $y^{2} - y^{1} = (0,2) - (1,1) = (-1,1)$ and $y^{4} - y^{3} = (1,-1)$ so that $S = \mbox{span}\{ (-1,1), (1,-1) \} = \mbox{span} \{ (-1,1) \}$. It follows that $s = 1$ so that this is a rank $1$ network. The deficiency is $\delta = c - l - s = 4 - 2 - 1 = 1$.
\end{example}

\begin{example}
\label{ex:envz}
Consider the following network, which was first presented as S.60 in \cite{ShinarFeinberg2010Robustness} as a model of the EnvZ/OmpR osmoregularity signaling pathway in \emph{escherichia coli}:
    \begin{equation}\label{eq:envz_ompr}
        \begin{aligned}
          & \AI \xrightleftharpoons[r_2]{r_1} \AII \xrightleftharpoons[r_4]{r_3} \AIII \xrightarrow{r_5} \AIV, \\
          & \AIV + \AV \xrightleftharpoons[r_7]{r_6} \AVI \xrightarrow{r_8} \AII + \AVII, \\
          & \AIII + \AVII \xrightleftharpoons[r_{10}]{r_9} \AVIII \xrightarrow{r_{11}} \AIII + \AV, \\
          & \AI + \AVII \xrightleftharpoons[r_{13}]{r_{12}} \AIX \xrightarrow{r_{14}} \AI + \AV,
        \end{aligned}
    \end{equation}
    where $X =$ EnvZ, $Y =$ OmpR, $D =$ ADP, $T =$ ATP, and $p =$ phosphate group.
 This network has the species set $\mathcal{S} = \{ XD, X, XT, X_p, Y, X_pY,
    Y_p, XTY_p, XDY_p\}$, so that $n=9$, and the complex set $\mathcal{C} = \{
    XD, X, XT, X_p, X_p + T, X_pY, X+Y_p, XT+Y_p, XTY_p, XT + Y, XD+Y_p, XDY_p,
    XD + Y \}$, so that $c=13$. The network has $4$ linkage classes, $8$ strong
    linkage classes, and $4$ terminal strong linkage classes. The network is not
    weakly reversible and the rank is $s =7$ so that the deficiency is $\delta =
    c - l - s = 13 - 4 - 7 = 2$.
\end{example}

\subsection{Mass action systems}

A common kinetic assumption is that of mass action, whereby the rate of a reaction is proportional to the product of the reactant species concentrations.
For example, a reaction of the form $A + B \xrightarrow{} \cdots$ would have
rate $k x_A x_B$ where $k \in \mathbb{R}_{> 0}$ is the \emph{rate constant} of the reaction and $x_A$ and $x_B$ denote the concentrations of $A$ and $B$, respectively.
Notably, for this choice of kinetics, the source complex $y = \sum_{i=1}^n y_{i} X_i$ of each reaction is assigned a monomial $\vx^y = x_1^{y_1} \cdots x_n^{y_n}$ where $x_i$ is the concentration of $X_i$.
We call this the \emph{mass action monomial} associated to $y$.

Under \emph{mass action kinetics}, the dynamics of a chemical reaction network $\N$ is governed by the system of polynomial ODEs
\begin{equation}\label{eq:dynamics}
  \frac{d\vx}{dt} =\sum_{r:y \to y' \in \mathcal{R}}  k_{r} (y' - y) \; \vx^y,
\end{equation}
where $k_{r} \in \mathbb{R}_{>0}$ is the \emph{rate constant} associated to the reaction $r \in \mathcal{R}$. Given an enumeration $\mathcal{R} = \{ r_1, \ldots, r_m\}$ of the reaction set, we write $k = (k_1, \dots, k_m)$ for an assignment of rate constants and
call the tuple $\M = (\S, \C, \R, k)$ a \emph{mass action system}. A consequence of \eqref{eq:dynamics} is that $\mathbf{x}'(t) \in S$ for all $t \geq 0$. Solutions of \eqref{eq:dynamics} are consequently restricted to \emph{stoichiometric compatibility classes} $(S+ \vx_0) \cap \mathbb{R}_{\geq 0}^n$ for each initial state $\vx_0 \in \mathbb{R}_{\geq 0}^n$.

The system of ODEs~\eqref{eq:dynamics} of the reaction network has the following useful reformulation:
\begin{equation}\label{eq:dyn_Ak}
  \frac{d\vx}{dt} = \mY \mA_\kappa \Psi(\vx),
\end{equation}
where: (1) $\mY$ is the $n \times c$ \emph{complex matrix} with columns the vectors of stoichiometric coefficients of $y^1, \dots, y^c$; (2) $\mA_\kappa$ is a $c \times c$ \emph{kinetic matrix} with entries ${(\mA_{\kappa})}_{ij} = k_r$ if $r : y^j \rightarrow y^i$ for $i \neq j$, and columns that sum to zero; and (3) $\Psi(\vx) = (\vx^{y^1}, \dots, \vx^{y^c})^t$ is the vector of mass action monomials.

Consider the following example.

\begin{example}
\label{ex:example3}
Reconsider \emph{Network 1} given in Example~\ref{ex:example1}. Under mass action kinetics, we associate to $r_1$ the rate $k_1 x_A x_B$ and to $r_2$ the rate $k_2 x_B$. The corresponding mass action system \eqref{eq:dynamics} is given by
\begin{equation}
\label{eq:n1}
\begin{split}
\frac{dx_A}{dt} & = -k_{1} x_A x_B + k_{2} x_B, \\
\frac{dx_B}{dt} & = k_{1} x_A x_B - k_{2} x_B.
\end{split}
\end{equation}
\end{example}

\subsection{Absolute concentration robustness}

\noindent Significant research has been conducted on the study of the steady states, and in particular positive steady states, of the system of ODEs \eqref{eq:dynamics} \citep{horn1972general,craciun2009toric,millan2012chemical}.
We will be interested in the following steady state properties:
\begin{definition}
\label{def:acr}
A mass action system $\M = (\S, \C, \R, k)$ is said to have:
\begin{enumerate}
\item
a \emph{robust ratio} between complexes $y$ and $y'$ if the ratio $\mathbf{x}^{y}/\mathbf{x}^{y'}$ takes the same value at every positive steady state $\mathbf{x} \in \mathbb{R}_{> 0 }^n$ of \eqref{eq:dynamics}.
\item
\emph{absolute concentration robustness} in species $A$ if $x_A$ takes the same value at every positive steady state $\mathbf{x} \in \mathbb{R}_{> 0 }^n$ of \eqref{eq:dynamics}.
\end{enumerate}
\end{definition}
\noindent Note that, as a special case, if two complexes $y$ and $y'$ have a robust ratio and differ only in a single species $A$, then ACR is guaranteed for $x_A$ \citep{ShinarFeinberg2010Robustness}. It is useful to introduce the \emph{robustness space} $R \subseteq \mathbb{R}^n$ of a mass action system $\M$ as the subspace given by
\begin{equation}
\label{eq:robustness_space}
R = \mathrm{span}\{y' - y \; | \; y \mbox{ and } y' \mbox{ have a robust ratio}\}.
\end{equation}
This definition allows us to restate an observation made by \citet[Proposition $S4.1$]{ShinarFeinberg2010Robustness} and \citet[Lemma $11$]{neigenfind2013relation} in slightly different forms.

\begin{lemma}
\label{lemma1}
  Consider a chemical reaction network $\mathcal{N}$ with corresponding mass action system $\M$ and robustness space $R$.
  If $y, y' \in \mathbb{Z}_{\geq 0}^n$ and $y' - y \in R$, then $y$ and $y'$ have a robust ratio.
  In particular, if $\mathbf{e}^i \in R$, where $\mathbf{e}^i \in \mathbb{Z}^n$ is the vector with $\mathbf{e}^i_i = 1$ and $\mathbf{e}^i_j = 0$ for $j \neq i$,
   then $\M$ has ACR in $X_i$.
\end{lemma}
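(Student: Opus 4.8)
The plan is to convert the multiplicative condition defining a robust ratio into a linear one by taking logarithms, and then to exploit the fact that ``having a robust ratio'' is governed by membership in a fixed linear subspace. Since $R$ is by definition the span of such difference vectors, it cannot escape that subspace, and the conclusion follows at once.

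First I would fix notation for the (possibly empty) set $E \subseteq \mathbb{R}_{>0}^n$ of positive steady states of \eqref{eq:dynamics}, and for $\mathbf{x} \in E$ write $\ell(\mathbf{x}) = (\log x_1, \dots, \log x_n)$. For $y, y' \in \mathbb{Z}_{\geq 0}^n$ and $\mathbf{x} \in E$ one has $\log(\mathbf{x}^{y}/\mathbf{x}^{y'}) = (y - y') \cdot \ell(\mathbf{x})$, and since $t \mapsto \log t$ is a bijection on $\mathbb{R}_{>0}$, the pair $y, y'$ has a robust ratio precisely when the linear functional $\mathbf{x} \mapsto (y - y') \cdot \ell(\mathbf{x})$ is constant on $E$. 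I would record this equivalence as the key reformulation, since it is what turns the problem into linear algebra.

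Next I would introduce the set
\[
W = \{ w \in \mathbb{R}^n : w \cdot \ell(\mathbf{x}) \text{ takes the same value for all } \mathbf{x} \in E \}.
\]
Fixing any reference point $\mathbf{x}^{*} \in E$, this is the orthogonal complement of $\mathrm{span}\{ \ell(\mathbf{x}) - \ell(\mathbf{x}^{*}) : \mathbf{x} \in E \}$ (and $W = \mathbb{R}^n$ when $E = \emptyset$, in which case the lemma holds vacuously); in every case $W$ is a linear subspace of $\mathbb{R}^n$. By the reformulation above, each generator $y' - y$ of $R$ lies in $W$, and because $W$ is closed under linear combinations it contains their span, giving $R \subseteq W$. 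This inclusion is the \emph{heart} of the argument: passing to the span does not enlarge the collection of directions along which steady-state log-concentrations are pinned.

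Finally, given $y, y' \in \mathbb{Z}_{\geq 0}^n$ with $y' - y \in R \subseteq W$, the reformulation yields immediately that $\mathbf{x}^{y}/\mathbf{x}^{y'}$ is constant on $E$, i.e. $y$ and $y'$ have a robust ratio. For the ACR claim I would apply this with $y = \mathbf{0}$ and $y' = \mathbf{e}^i$, both of which lie in $\mathbb{Z}_{\geq 0}^n$ and satisfy $y' - y = \mathbf{e}^i \in R$; the resulting robust ratio $\mathbf{x}^{\mathbf{0}}/\mathbf{x}^{\mathbf{e}^i} = 1/x_i$ forces $x_i$ to take a single value across all positive steady states, which is exactly ACR in $X_i$. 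I do not anticipate a serious obstacle: the only points needing care are the positivity that makes the logarithm defined and injective, and the routine observation that a subspace contains the span of any subset of itself; the degenerate case of no positive steady state is handled trivially.
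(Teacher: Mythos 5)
Your proof is correct and is essentially the paper's argument rewritten in logarithmic coordinates: the paper decomposes $y'-y=\sum \alpha_{ij}(y^j-y^i)$ over pairs with robust ratios and observes that $\vx^{y'}/\vx^{y}=\prod \beta_{ij}^{\alpha_{ij}}$ is constant at every positive steady state, which is precisely your statement that the subspace $W$ of pinned directions contains the generators of $R$ and hence $R$ itself. Your treatment of the ACR claim (taking $y=\mathbf{0}$ and $y'=\mathbf{e}^i$) likewise coincides with the paper's.
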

\begin{proof}
Let $\mathcal{C}_{\nu} \subseteq \mathcal{C}$, $\nu = 1, \ldots, \mu$, denote sets of complexes which have robust ratios, i.e. $y^i, y^j \in \mathcal{C}_{\nu}$ implies $y^i$ and $y^j$ have a robust ratio. Then, if $y'-y \in R$, we have
  \begin{equation*}
    y'-y=\sum_{\substack{y^i,y^j \in \C_\nu\\{\nu=1,\dots,\mu}}} \alpha_{ij} (y^j-y^i),
  \end{equation*}
  for some $\alpha_{ij} \in \mathbb{R}$, and $\frac{\vx^{y^j}}{\vx^{y^i}}=\beta_{ij}$ at each positive steady state, for some $\beta_{ij}\in\mathbb{R}_{>0}$.
  Then at each positive steady state we have
  \begin{equation*}
    \frac{\vx^{y'}}{\vx^y} = \prod_{\substack{y^i,y^j \in \C_\nu\\{\nu=1,\dots,\mu}}} \left(\frac{\vx^{y^j}}{\vx^{y^i}}\right)^{\alpha_{ij}}
                          = \prod_{\substack{y^i,y^j \in \C_\nu\\{\nu=1,\dots,\mu}}} \beta_{ij}^{\alpha_{ij}},
  \end{equation*}
  and the system has a robust ratio in $y$ and $y'$. The second point follows immediately taking $y' = \mathbf{e}^i$ and $y = \mathbf{0}$.
\end{proof}

\noindent The objective of techniques for identifying ACR, including those of this paper, is to find as many robust complexes as possible and then establish whether $\mathbf{e}^i \in R$ for any $i=1,\dots,n$. Note that, in order to accomplish this, it is not necessary to find all robust ratios, which in general may be very difficult. Since we can define $R'$ as in \eqref{eq:robustness_space} using any subset of complexes $y$ and $y'$ known to have a robust ratio, and $R' \subseteq R$ necessarily holds, it suffices to show that $\mathbf{e}^i \in R'$.

We now restate the results on identification of complexes with robust ratio presented in \cite{ShinarFeinberg2010Robustness}.

\begin{theorem}
\label{thm:robustness_def0_def1}
Consider a chemical reaction network $\N$, with an associated mass action system $\M$.
\begin{enumerate}
\item
  If $\N$ has a deficiency of zero and is weakly reversible,
  then $\M$ has a robust ratio in each pair of complexes $y$ and $y'$ belonging to a common linkage class in $\N$.
\item
  If $\N$ has a deficiency of one, and $\M$ admits a positive steady state,
  then $\M$ has a robust ratio in every pair of nonterminal complexes $y$ and $y'$ in $\N$.
\end{enumerate}
\end{theorem}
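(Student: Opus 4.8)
The plan is to analyze both parts through the reformulation \eqref{eq:dyn_Ak}, tracking where the vector $\mathbf{g} := \mA_\kappa \Psi(\vx)$ can lie when $\vx$ is a positive steady state. Let $\omega_{y^1},\dots,\omega_{y^c}$ denote the standard basis of complex space $\mathbb{R}^{\C}$ and set $\Gamma = \mathrm{span}\{\omega_{y'} - \omega_y : y\to y' \in \R\}$, so that $\dim\Gamma = c - l$, $\mY(\Gamma) = S$, and consequently $\delta = \dim(\ker\mY \cap \Gamma)$. Since each column of $\mA_\kappa$ is a combination of vectors $\omega_{y'}-\omega_y$, we have $\mathrm{im}\,\mA_\kappa \subseteq \Gamma$; and at any steady state $\mY\mathbf{g} = \mathbf{0}$, so $\mathbf{g} \in \ker\mY$. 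Hence $\mathbf{g} \in \ker\mY \cap \Gamma$ for every positive steady state, a subspace of dimension exactly $\delta$.

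For the first part, $\delta = 0$ forces $\mathbf{g} = \mathbf{0}$, i.e. every positive steady state is complex balanced, $\mA_\kappa\Psi(\vx) = \mathbf{0}$. Weak reversibility makes $\mA_\kappa$ block diagonal across linkage classes, each block being the Laplacian of a strongly connected graph, whose kernel is one dimensional and spanned by a strictly positive vector of \emph{tree constants} $K_i$ (the Matrix--Tree theorem), with $K_i$ a sum over spanning trees rooted at $y^i$ of products of the incident rate constants. I would conclude that, for a linkage class $\mathcal{L}_\theta$, the restriction of $\Psi(\vx)$ to $\mathcal{L}_\theta$ equals $c_\theta (K_i)_{y^i \in \mathcal{L}_\theta}$ for some scalar $c_\theta > 0$; hence for $y^i, y^j$ in a common linkage class $\vx^{y^j}/\vx^{y^i} = K_j/K_i$, independent of the steady state, which is the claimed robust ratio.

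For the second part the idea is the same, but now $\ker\mY\cap\Gamma$ is a fixed one dimensional line $\mathbb{R}\mathbf{u}$, so $\mathbf{g} = \lambda(\vx)\,\mathbf{u}$ for a scalar $\lambda(\vx)$ and a vector $\mathbf{u}$ determined by the network alone. I would then order the complexes so that the nonterminal ones $U$ precede those in the terminal strong linkage classes $T$; because no reaction leaves a terminal class, $\mA_\kappa$ is block lower triangular,
\[
\mA_\kappa = \begin{pmatrix} B & 0 \\ C & D \end{pmatrix},
\]
with $B$ the principal block on $U$. Writing $\Psi(\vx) = (\Psi_U, \Psi_T)$ and $\mathbf{u} = (\mathbf{u}_U, \mathbf{u}_T)$, the top block of $\mA_\kappa\Psi(\vx) = \lambda \mathbf{u}$ reads $B\,\Psi_U = \lambda\,\mathbf{u}_U$. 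Since every nonterminal complex can reach a terminal class, $B$ is a nonsingular $M$-matrix, so $\Psi_U = \lambda\, B^{-1}\mathbf{u}_U$: the monomials at nonterminal complexes are all proportional, through the single state dependent scalar $\lambda$, to the fixed vector $B^{-1}\mathbf{u}_U$. The scalar cancels in ratios, giving $\vx^{y^j}/\vx^{y^i} = (B^{-1}\mathbf{u}_U)_j/(B^{-1}\mathbf{u}_U)_i$ for nonterminal $y^i, y^j$, a robust ratio. The existence of a positive steady state is used here to guarantee $\lambda \neq 0$ and that $B^{-1}\mathbf{u}_U$ has strictly one sign, so the ratios are well defined and positive.

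The main obstacle is the second part: unlike the weakly reversible deficiency zero case, there is no complex balancing, so the monomials are not simply proportional to tree constants and one cannot argue linkage class by linkage class. The decisive steps are therefore recognizing that $\mathbf{g}$ is pinned to a single fixed line as the steady state varies, which is exactly what $\delta = 1$ buys, and establishing invertibility of the nonterminal block $B$, which decouples $\Psi_U$ from the terminal complexes and shows the one remaining degree of freedom $\lambda$ does not perturb the ratios among nonterminal complexes. I would expect the bookkeeping behind $B^{-1}\mathbf{u}_U$ having constant sign, and the explicit identification of $\mathbf{u}$, to be the fiddliest points.
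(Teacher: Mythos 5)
Your proof is correct. For part 1 it is essentially the paper's own argument (the paper proves the generalized version, Theorem~\ref{thm:robustness_def0_def1_gma} in Appendix~\ref{app:proofs}, of which this theorem is the special case where the kinetic map is the identity): deficiency zero forces $\mA_\kappa\Psi(\bar{\vx})=\mathbf{0}$ at every positive steady state, and the Matrix--Tree theorem then gives the ratios as ratios of tree constants, which is exactly the binomial characterization the paper imports by citation. For part 2, however, you take a genuinely different route. The paper follows Shinar--Feinberg: it cites their Lemma S3.19 (a basis of $\ker \mA_\kappa$ supported on the terminal strong linkage classes) and Lemma S3.20 ($\dim\ker(\mY\mA_\kappa)\le 1+t$), uses the positive steady state to get equality, with a basis $\{b^0,\dots,b^t\}$ in which only $b^0$ has support on the nonterminal complexes, and reads the ratios off the expansion of $\Psi(\bar{\vx})$ in that basis. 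You instead argue on the image side: $\mA_\kappa\Psi(\bar{\vx})$ is pinned to the one-dimensional space $\ker\mY\cap\Gamma$ (your identity $\delta=\dim(\ker\mY\cap\Gamma)$ is correct, and your observation $\mathrm{im}\,\mA_\kappa\subseteq\Gamma$ is precisely the fact underlying the Lemma S3.20 bound), and then nonsingularity of the nonterminal block $B$ transfers that one-dimensionality to $\Psi_U$ itself, yielding the robust value explicitly as ratios of entries of $B^{-1}\mathbf{u}_U$. What each approach buys: the paper's route is short given the cited lemmas and transplants verbatim to generalized mass action systems (replace $\Psi$ by $\Psi_K$), which is what the paper actually needs for Theorem~\ref{thm:robustness_translation}; your route is self-contained, constructive about the robust value, and isolates the only two real ingredients, namely the deficiency count and the invertibility of the sub-Laplacian $B$. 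The one step you assert rather than prove --- that $-B$ is a nonsingular M-matrix because every nonterminal complex reaches some terminal strong linkage class --- is classical and is essentially the content of Lemma S3.19, so your worry that it is the fiddliest point is fair but benign; the constant-sign issue you flag is immediate, since $B^{-1}\mathbf{u}_U=\Psi_U(\bar{\vx})/\lambda$ for any one positive steady state. Note finally that your argument touches the monomials only through their positivity, so it too generalizes unchanged to the kinetic-complex setting of Theorem~\ref{thm:robustness_def0_def1_gma}.
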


The most general result presented in~\cite{ShinarFeinberg2010Robustness} about identification of robust ratios and species relies on the identification of a \emph{direct decomposition} of the network $\mathcal{N}$, i.e.\ a partition of the set of reactions $\mathcal{R}$ into subsets $\mathcal{R}_1, \dots, \mathcal{R}_k$ such that, if $\mathcal{N}_i$ is the network consisting of the reactions in $\mathcal{R}_i$, and $s_i$ is the rank of $\mathcal{N}_i$, then the $s_i$ sum to the total rank $s$ of $\mathcal{N}$. Since an equilibrium of the mass action system associated to $\mathcal{N}$ is an equilibrium for each sub-mass action system identified by the $\mathcal{N}_i$, then if one can prove that a complex ratio is robust in a subnetwork, then this ratio must be robust for the overall network.
The application of the deficiency zero and deficiency one criteria for the identification of robust ratios to the subnetworks $\mathcal{N}_i$ can therefore reveal robustness for the larger network.

Reconsider our earlier examples.

\begin{example}
\label{ex:example4}
  Reconsider the network from Examples~\ref{ex:example1} and~\ref{ex:example3}.
  The network is deficiency one and the nonterminal complexes are $A+B$ and $B$. Consequently, these complexes have a robust ratio by Theorem~\ref{thm:robustness_def0_def1}, part $2$.
  Since $A+B$ and $B$ differ in the species $A$, ACR in species $A$ follows.
\end{example}

\begin{example}
\label{ex:envz2}
Reconsider the network considered in Example \ref{ex:envz}.
  The network has deficiency $2$, and admits a subnetwork of deficiency zero given by the two reactions $\AI \xrightleftharpoons[r_2]{r_1} \AII$.
  By Theorem~\ref{thm:robustness_def0_def1}, part $1$, applied to this subnetwork, we conclude that the complexes $\AI$ and $\AII$ have a robust ratio.
  We therefore have that $R' = \mbox{span} \{ \AI - \AII \} \subseteq R$.
  Note that $R'$ is based on the set of known robust ratios, whereas the true set of robust ratios may produce a larger set $R$. It is clear that there is no species $X_i \in \mathcal{S}$ such that $\mathbf{e}^i \in R'$. ACR can therefore not be guaranteed by Theorem \ref{thm:robustness_def0_def1}.

\begin{sloppypar}
  Nevertheless, the system permits ACR in the species $Y_p$. This result was obtained in \citet{ShinarFeinberg2010Robustness} by calculating the concentration of $Y_p$ at steady states directly from the steady state equations:
\end{sloppypar}
  \begin{equation}
  \label{eq:yp_value}
  x_{\AVII} = \frac{k_1k_3k_5(k_{10}+k_{11})(k_{13}+k_{14})}{k_1k_3k_9k_{11}(k_{13}+k_{14})+k_2(k_4+k_5)(k_{10}+k_{11})k_{12}k_{14}}.
  \end{equation}
 An alternative derivation for~\eqref{eq:yp_value} based on the calculation of linear complex invariants is presented in~\cite{karp2012complex}.
  In the following sections, we will develop a method which allows us to expand the set of complexes having a robust ratio and use these to establish ACR in species $\AVII$. The developed method will also allow us to compute the ACR value \eqref{eq:dynamics} directly from a graphical representation known as a generalized chemical reaction network.
\end{example}

\section{Main Results}\label{sec:translated}

In this section, we will develop a new method by which to establish robust ratios of complexes and ACR of species. This method is based upon the method of \emph{network translation} introduced by \cite{johnston2014translated} which utilizes the theory of \emph{generalized chemical reaction networks} introduced by \cite{muller2012generalized}. We introduce these concepts now.

\subsection{Generalized chemical reaction networks}

\begin{sloppypar}
The following generalization to the notion of a chemical reaction network was introduced in \citet{muller2012generalized}.
\end{sloppypar}

\begin{definition}
\label{def:gcrn}
    A \emph{generalized chemical reaction network} $\N = (\S, \C, \C_K, \R)$ is a chemical reaction network $\N = (\S, \C, \R)$, with source complex set $\C\R$, together with a set of complexes $\C_K$, called \emph{kinetic complexes}, and a bijective map $h: \C\R \rightarrow \C_K$, called the \emph{kinetic map}.
\end{definition}

\noindent All of the connectivity concepts introduced in Section \ref{sec:crn} can be extended to generalized mass action networks.
   Some concepts, however, depend upon whether the stoichiometric complex set $\C$ or kinetic complex set $\C_K$ is chosen. 
We define the \emph{stoichiometric subspace} $S$ and \emph{stoichiometric deficiency} $\delta$ to be associated to the network
$(\S, \C,\R)$. For weakly reversible networks, we define the \emph{kinetic-order subspace} $S_K$ and the \emph{kinetic deficiency} $\delta_K$ to be associated to the network $(\S, \C_K, \R)$.

A \emph{generalized mass action system} $\M = (\S, \C, \C_K, \R, k)$ on $\N$ is constructed by using the complexes $\C$ to determine the stoichiometry, and the kinetic complexes $\C_K$ identified by the map $h$ to determine the reaction rates. The system of ODEs associated to $\M$ is given by
\begin{equation}\label{eq:dyn_Ak_gen}
  \frac{d\vx}{dt} = \mY \mA_\kappa \Psi_K(\vx),
\end{equation}
where $\mY$ is the complex matrix associated with $\C$ and $\Psi_K(\mathbf{x}) =(\vx^{h(y^1)}, \dots, \vx^{h(y^c)})$,
with the map $h$ being arbitrarily extended to $\C \setminus \C\R$.
The generalized mass action system in~\eqref{eq:dyn_Ak_gen} differs from~\eqref{eq:dyn_Ak} only in the monomial vector $\Psi(\vx)$.

We represent generalized mass action networks as graphs, with
each node enclosing both a stoichiometric complex and its associated kinetic complex, and with the latter indicated in parentheses as
in~\eqref{eq:proper_acr_intro_translated}. This allows for a compact
representation of both the graph $(\C, \R)$ and the graph $(\C_K, \R)$. Consider the following examples.

\begin{example}
\label{example6}
Consider the following generalized chemical reaction network:
\begin{equation}
\label{eq:gen1}
    \mbox{\ovalbox{$\begin{array}{c} A+B \\ (A+B) \end{array}$}} \xrightleftharpoons[r_2]{r_1} \mbox{\ovalbox{$\begin{array}{c} 2B \\ (B) \end{array}$}}.
\end{equation}
We have the following sets: $\S = \{ A, B \}$, $\C = \{ A+B, 2B\}$, $\C_K = \{ A+B, B \}$, and $\R = \{ A+B \to 2B, 2B \to A+B\}$. The mapping $h$ is defined by reading the complexes in the boxes: $h(A+B) = A+B$ and $h(2B)=B$. We furthermore have $S = \mbox{span} \{ (1,-1) \}$, $S_K = \mbox{span} \{ (1,0) \}$, $\delta = \delta_K = 0$, and that the network is weakly reversible.

The corresponding generalized mass action system $\M = (\S, \C, \C_K, \R, k)$ is governed by the following dynamical equations \eqref{eq:dyn_Ak_gen}:
\begin{equation}
\label{eq:11}
\begin{split} \dot{x}_A & = -k_1 x_A x_B + k_2 x_B, \\ \dot{x}_B & = k_1 x_A x_B - k_2 x_B. \end{split}
\end{equation}
Notably, we have that \eqref{eq:11} coincides with \eqref{eq:n1}. That is, the systems are dynamically equivalent.
\end{example}

\subsection{Translated chemical reaction networks}

The realization that a mass action system can have the same system of ODEs as a generalized mass action system (see Example \ref{example6}) motivated the introduction of \emph{translated chemical reaction networks}~\citep{johnston2014translated,johnston2015computational}.
\begin{definition}
\label{def:translation}
  Let $\N = (\S, \C, \R)$ be a chemical reaction network with source complex set $\C\R$. Consider a generalized chemical reaction network $\tN = (\tS, \tC, \tC_{K},
  \tR)$ with source complex set $\widetilde{\C\R}$ and kinetic map $h: \widetilde{\C\R} \rightarrow \tC_{K} \subseteq \C\R$. Then $\tN$ is a \emph{translation} of the network $\N$ if there exist surjective maps $f: \R \rightarrow \tR$ and $g: \C\R \rightarrow \widetilde{\C\R}$ such that
    \begin{itemize}
        \item[(i)] if $f(r) = \tilde{r}$ for $r: y \rightarrow y'$ and $\tilde{r}: \tilde{y} \rightarrow \tilde{y}'$, then $y' - y = \tilde{y}' - \tilde{y}$;
        \item [(ii)] if $r: y \rightarrow y'$, $f(r)=\tilde{r}$, and $g(y) = \tilde{y}$, then $\tilde{r}: \tilde{y} \rightarrow \tilde{y}'$ for some $\tilde{y}' \in \tC$; and
        \item[(iii)] $g(h(\tilde{y})) = \tilde{y}$ for all $\tilde{y} \in \widetilde{\C\R}$.
    \end{itemize}
\end{definition}

The process of network translation can be visualized by adding complexes to both the source and product side of a reaction. Such a \emph{translation scheme} associates reactions to generalized reactions in the following way:
\begin{equation}
\label{eq:prop_trans}
y \stackrel{r}{\longrightarrow} y' \; \; \; (+ \upsilon) \; \; \; \Longrightarrow \; \; \; \mbox{\ovalbox{$\begin{array}{c} y+\upsilon \\ (y) \end{array}$}} \stackrel{\tilde{r}}{\longrightarrow} \mbox{\ovalbox{$\begin{array}{c} y'+\upsilon \\ (-) \end{array}$}},
\end{equation}
where the \emph{translation complex} is denoted $\upsilon \in \mathbb{Z}_{\geq 0}^n$.
That is, if we translate a reaction by $\upsilon$, the source complex $y$ becomes the kinetic complex associated to the stoichiometric complex $y+\upsilon$, which reacts to the product complex $y' + \upsilon$.

The translation scheme \eqref{eq:prop_trans} implicitly defines the mappings $f$ and $g$, according to Definition \ref{def:translation}. Specifically, we have $f(r) = \tilde{r}$ and $g(y) = y+\upsilon$. The mapping $h$, however, may not always be immediately defined.
For example, suppose that the source complexes of two reactions $r: y \to y'$ and $\hat{r}: \hat{y} \to \hat{y}'$ are translated to a common complex $\tilde{y}$. We represent this as follows:
\begin{equation}
\label{eq:improp_trans}
\begin{array}{l} y \stackrel{r}{\longrightarrow} y' \; \; \; (+ \upsilon) \\ \hat{y} \stackrel{\hat{r}}{\longrightarrow} \hat{y}' \; \; \; (+ \hat{\upsilon}) \end{array} \; \; \; \Longrightarrow \; \; \; \mbox{\ovalbox{$\begin{array}{c} y'+ \upsilon\\ (-) \end{array}$}}\stackrel{\tilde{r}}{\longleftarrow} \mbox{\ovalbox{$\begin{array}{c} \tilde{y} \\ (y)_{\tilde{r}} \\ (\hat{y})_{\hat{\tilde{r}}} \end{array}$}} \stackrel{\hat{\tilde{r}}}{\longrightarrow} \mbox{\ovalbox{$\begin{array}{c} \hat{y}'+\hat{\upsilon} \\ (-) \end{array}$}}
\end{equation}
where $\tilde{y} = y + \upsilon = \hat{y} + \hat{\upsilon}$ and $\upsilon, \hat{\upsilon} \in \mathbb{Z}_{\geq 0}^n$ are the \emph{translation complexes} associated with reactions $r$ and $\hat{r}$, respectively. We have $f(r) = \tilde{r}$, $f(\hat{r}) = \hat{\tilde{r}}$, and $g(y) = g(\hat{y}) = \tilde{y}$.
If $y \neq \hat{y}$, the complex $\tilde{y}$ is therefore associated with two source complexes, $y$ and $\hat{y}$, and the respective reactions are indicated with subscripts in \eqref{eq:improp_trans}.

The translation scheme \eqref{eq:improp_trans} produces two candidates for the kinetic complex of $\tilde{y}$. We therefore require the following refinements of network translation.

\begin{definition}
\label{def:proper}
A translation $\tN$ of $\N$ is said to be \emph{proper} if $g$ is injective, and \emph{improper} otherwise. If $\tN$ is an improper translation of $\N$, we define the \emph{improper reaction set} $\mathcal{R}_I \subseteq \mathcal{R}$ to be given by
\[ \mathcal{R}_I = \left\{ y \to y' \in \mathcal{R} \; | \; y \not\in \tC_{K} \right\}.\]
\end{definition}

\noindent For proper translations, each kinetic complex in the resulting graph is uniquely defined by $g$ so that $h = g^{-1}$. In the scheme \eqref{eq:prop_trans}, this corresponds to $h(y+\upsilon)=y$.

If a translation is improper, as in \eqref{eq:improp_trans}, we are only able to define a generalized chemical reaction network after restricting to a single source complex in the preimage $g^{-1}$. For example, defining $h(\tilde{y}) = y$ in \eqref{eq:improp_trans} gives
\begin{equation}
\label{eq:improp_trans_choice}
\mbox{\ovalbox{$\begin{array}{c} y' + \upsilon \\ (-) \end{array}$}}\stackrel{\tilde{r}}{\longleftarrow} \mbox{\ovalbox{$\begin{array}{c} \tilde{y} \\ (y) \end{array}$}} \stackrel{\hat{\tilde{r}}^*}{\longrightarrow} \mbox{\ovalbox{$\begin{array}{c} \hat{y}'+\hat{\upsilon} \\ (-) \end{array}$}}.
\end{equation}
In this case, we have that $\hat{r} \in \mathcal{R}_I$ since $\hat{y} \not\in \tC_K$, i.e. the source complex of $\hat{r}$ is not a kinetic complex of the corresponding reaction in the translation. We indicate this with an asterisk in the generalized graph of complexes. Note that the translation would be equally valid if we had instead chosen $h(\tilde{y}) = \hat{y}$, in which case $r \in \mathcal{R}_I$.

When discussing network translations, it will be convenient to adopt specialized notation for the rate constant set $\tilde{k} \in \mathbb{R}_{> 0}^{\tilde{m}}$ in the corresponding generalized mass action system $\tM = (\tS, \tC, \tC_K, \tR, \tilde{k})$. We introduce the following.

\begin{definition}
\label{def:trans_mas}
Consider a translation $\tN$ of a chemical reaction network $\N$ with corresponding mass action system $\M$ and associated rate constant vector $k \in \mathbb{R}_{> 0}^m$. We define the \emph{translated mass action system} to be the generalized mass action system $\tM = (\tS, \tC, \tC_K, \tR, \tilde{k})$ taken with the following rate constants:
\begin{equation}\label{eq:translation_kin_par}
  \tilde{k}_{\tilde{r}} = \sum_{\substack{r \in \R \setminus \mathcal{R}_I\\f(r) =\tilde{r}}}k_{r} + \sum_{\substack{r \in \mathcal{R}_I\\f(r) = \tilde{r}}}k_{r}^*.
\end{equation}
The translated chemical reaction system $\tM$ and mass action system $\M$ are furthermore said to be: (a) \emph{dynamically equivalent} if the dynamical equations \eqref{eq:dyn_Ak} and \eqref{eq:dyn_Ak_gen} coincide; and (b) \emph{steady state equivalent} if there is a choice of values for $k_r^*$ for $r \in \mathcal{R}_I$ such that the dynamical equations \eqref{eq:dyn_Ak} and \eqref{eq:dyn_Ak_gen} have the same positive steady states. If the translation is improper and case (b) holds, the translation is said to be \emph{resolvable}.
\end{definition}

For proper translations, the corresponding translated mass action system $\tM$ is necessarily dynamically equivalent to the mass action system $\M$ corresponding to the original network \citep[Lemma 2]{johnston2014translated}. For improper translations, the systems may not be dynamically equivalent; however, sufficient conditions for resolvability were presented in \cite{johnston2014translated,johnston2015computational}.

Consider the following examples.
\begin{example}
\label{ex:example1_transl}
Reconsider the network from Examples \ref{ex:example1}, \ref{ex:example3} and \ref{ex:example4}. The translation scheme
\begin{equation}
\label{eq:network1_scheme}
  \begin{tikzcd}
    A+B \arrow[r, "r_1"] & 2B & (+\emptyset), \\[-0.2in]
    B \arrow[r, "r_2"] & A & (+B)
  \end{tikzcd}
\end{equation}
directly generates the generalized reaction network \eqref{eq:gen1} considered in Example \ref{example6}.
Formally, we have that $f(r_1) = \tilde{r}_1$, $f(r_2) = \tilde{r}_2$, $g(A+B) = A+B$, and $g(B) = 2B$. Since $g$ is injective, the translation is proper and the kinetic mapping $h$ is defined as $h = g^{-1}$. As previously noted, $\M$ and $\tM$ are dynamically equivalent.

\end{example}

\begin{example}
\label{ex:envz3}
Reconsider the network from Examples \ref{ex:envz} and \ref{ex:envz2}. In Section $5.4$ of~\cite{johnston2014translated}, the following translation scheme was proposed:
      \begin{equation}\label{eq:envz_ompr_trans2}
        \begin{aligned}
          & \AI \xrightleftharpoons[r_2]{r_1} \AII \xrightleftharpoons[r_4]{r_3} \AIII \xrightarrow{r_5} \AIV,  \; \; \; & ( + \AI + \AIII + \AV),\\
          & \AIV + \AV \xrightleftharpoons[r_7]{r_6} \AVI \xrightarrow{r_8} \AII + \AVII, \; \; \; & ( + \AI + \AIII),\\
          & \AIII + \AVII \xrightleftharpoons[r_{10}]{r_9} \AVIII \xrightarrow{r_{11}} \AIII + \AV, \; \; \; & ( + \AI + \AII ), \\
          & \AI + \AVII \xrightleftharpoons[r_{13}]{r_{12}} \AIX \xrightarrow{r_{14}} \AI + \AV, \; \; \; & ( + \AII + \AIII).
        \end{aligned}
    \end{equation}
The translation scheme \eqref{eq:envz_ompr_trans2} yields the following:
      \begin{equation}\label{eq:envz_ompr_trans1}
        \begin{tikzcd}[column sep=0.5cm]
            \mbox{\ovalbox{$\begin{array}{c} 2\AI + \AIII + \AV \\ (\AI) \end{array}$}} \arrow[r, rightharpoonup, yshift=+0.2ex, "\tilde{r}_{1}"]
          & \mbox{\ovalbox{$\begin{array}{c}\AI + \AII + \AIII + \AV \\ (\AII) \end{array}$}} \arrow[l, rightharpoonup, yshift=-0.2ex, "\tilde{r}_{2}"] \arrow[r, rightharpoonup, yshift=+0.2ex, "\tilde{r}_{3}"]
          & \mbox{\ovalbox{$\begin{array}{c} \AI + 2\AIII + \AV \\ (\AIII) \end{array}$}} \arrow[l, rightharpoonup, yshift=-0.2ex, "\tilde{r}_{4}"] \arrow[d, "\tilde{r}_{5}"] & \\
            \mbox{\ovalbox{$\begin{array}{c} \AII + \AIII + \AIX \\ (\AIX) \end{array}$}} \arrow[ur, "\tilde{r}_{14}"] \arrow[dr, rightharpoonup, "\tilde{r}_{13}", xshift=+0.6ex]
          & \mbox{\ovalbox{$\begin{array}{c} \AI + \AII + \AVIII \\ (\AVIII) \end{array}$}} \arrow[u, "\tilde{r}_{11}"] \arrow[d, rightharpoonup, "\tilde{r}_{10}", xshift=+0.2ex]
          & \mbox{\ovalbox{$\begin{array}{c} \AI + \AIII + \AIV + \AV \\ (\AIV + \AV) \end{array}$}} \arrow[d, rightharpoonup, "\tilde{r}_{6}", xshift=+0.2ex] \\
          &  \mbox{\ovalbox{$\begin{array}{c} \AI + \AII + \AIII + \AVII \\ (\AIII + \AVII)_{\tilde{r}_9} \\ (\AI + \AVII)_{\tilde{r}_{12}} \end{array}$}} \arrow[ul, rightharpoonup, "\tilde{r}_{12}", xshift=-0.6ex] \arrow[u, rightharpoonup, "\tilde{r}_{9}", xshift=-0.2ex]
          & \mbox{\ovalbox{$\begin{array}{c} \AI + \AIII + \AVI \\ (\AVI) \end{array}$}} \arrow[l, "\tilde{r}_8"'] \arrow[u, rightharpoonup, "\tilde{r}_{7}", xshift=-0.2ex] &
      \end{tikzcd}
    \end{equation}
\noindent Since $\AIII + \AVII$ and $\AI + \AVII$ are both translated to $\AI + \AII + \AIII + \AVII$, $g$ is not injective and the translation is improper.
We may select either of the translated source complexes to be the corresponding kinetic complex.
If we define $h(\AI + \AII + \AIII + \AVII) = \AIII + \AVII$ and $h(\tilde{y}) = g^{-1}(\tilde{y})$ for all other $\tilde{y} \in \tC$ we arrive at the following generalized chemical reaction network:
      \begin{equation}\label{eq:envz_ompr_gma}
        \begin{tikzcd}[column sep=0.5cm]
            \mbox{\ovalbox{$\begin{array}{c} 2\AI + \AIII + \AV \\ (\AI) \end{array}$}} \arrow[r, rightharpoonup, yshift=+0.2ex, "\tilde{r}_{1}"]
          & \mbox{\ovalbox{$\begin{array}{c}\AI + \AII + \AIII + \AV \\ (\AII) \end{array}$}} \arrow[l, rightharpoonup, yshift=-0.2ex, "\tilde{r}_{2}"] \arrow[r, rightharpoonup, yshift=+0.2ex, "\tilde{r}_{3}"]
          & \mbox{\ovalbox{$\begin{array}{c} \AI + 2\AIII + \AV \\ (\AIII) \end{array}$}} \arrow[l, rightharpoonup, yshift=-0.2ex, "\tilde{r}_{4}"] \arrow[d, "\tilde{r}_{5}"] & \\
            \mbox{\ovalbox{$\begin{array}{c} \AII + \AIII + \AIX \\ (\AIX) \end{array}$}} \arrow[ur, "\tilde{r}_{14}"] \arrow[dr, rightharpoonup, "\tilde{r}_{13}", xshift=+0.6ex]
          & \mbox{\ovalbox{$\begin{array}{c} \AI + \AII + \AVIII \\ (\AVIII) \end{array}$}} \arrow[u, "\tilde{r}_{11}"] \arrow[d, rightharpoonup, "\tilde{r}_{10}", xshift=+0.2ex]
          & \mbox{\ovalbox{$\begin{array}{c} \AI + \AIII + \AIV + \AV \\ (\AIV + \AV) \end{array}$}} \arrow[d, rightharpoonup, "\tilde{r}_{6}", xshift=+0.2ex] \\
          &  \mbox{\ovalbox{$\begin{array}{c} \AI + \AII + \AIII + \AVII \\ (\AIII + \AVII) \end{array}$}} \arrow[ul, rightharpoonup, "\tilde{r}_{12}^*", xshift=-0.6ex] \arrow[u, rightharpoonup, "\tilde{r}_{9}", xshift=-0.2ex]
          & \mbox{\ovalbox{$\begin{array}{c} \AI + \AIII + \AVI \\ (\AVI) \end{array}$}} \arrow[l, "\tilde{r}_8"'] \arrow[u, rightharpoonup, "\tilde{r}_{7}", xshift=-0.2ex] &
      \end{tikzcd}
    \end{equation}
We have $\tilde{\delta}=\tilde{\delta}_K = 0$ and that the network is weakly reversible. Note that the source complex $XD + Y_p$ of $r_{12}$ is not the kinetic complex of the corresponding reaction in \eqref{eq:envz_ompr_gma}, so that $\mathcal{R}_I = \{ r_{12}\}$. This is indicated with an asterisk.
Although the $\M$ and $\tM$ are not dynamically equivalent, we will be able to show that the translation is resolvable. Consequently, we will be able to establish steady state properties of $\M$ by analyzing the generalized system \eqref{eq:envz_ompr_gma}.

\end{example}

\subsection{Translations and robustness}\label{sec:transl_rob}

In this section we establish relationships between robustness and network translation. We start with the following result, which is proved in Appendix~\ref{app:proofs}.
\begin{theorem}
\label{thm:robustness_translation}
Consider a chemical reaction network $\N$, with an associated mass action system $\M$,
and suppose that $\M$ admits a positive steady state.
Let $\tN$ be a translation of $\N$ which is either proper, or improper and resolvable. Then:
\begin{enumerate}
\item
  If $\tN$ has a stoichiometric deficiency of zero and is weakly reversible,
  then $\M$ has a robust ratio
  in each pair of complexes $y$ and $y'$ which are translated to a common linkage class in $\tN$.
\item
  If $\tN$ has a stoichiometric deficiency of one,
  then $\M$ has a robust ratio in every pair of complexes $y$ and $y'$ which are translated to nonterminal complexes in $\tN$.
\end{enumerate}
\end{theorem}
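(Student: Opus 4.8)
The plan is to transfer the question from $\M$ to the translated generalized mass action system $\tM$, and then to establish the robust-ratio statements directly for $\tM$ by running the linear-algebraic arguments behind Theorem~\ref{thm:robustness_def0_def1} with the kinetic monomial vector $\Psi_K$ in place of $\Psi$. The first step is to record that positive steady states transfer: if $\tN$ is proper then $\M$ and $\tM$ are dynamically equivalent, and if $\tN$ is improper and resolvable then, for a suitable choice of the $k_r^*$, the two systems share the same positive steady states. In either case every positive steady state $\vx$ of $\M$ solves the steady state equation $\mY \mA_\kappa \Psi_K(\vx) = \mathbf{0}$ of $\tM$, so it suffices to produce the claimed robust ratios from that relation. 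The bridge back to the original complexes is property~(iii) of Definition~\ref{def:translation}: since $g(h(\tilde y)) = \tilde y$, the kinetic complex attached to the stoichiometric complex $g(y)$ is $y$ itself, so the $g(y)$-component of $\Psi_K(\vx)$ is exactly the original monomial $\vx^y$. Hence a fixed ratio between the $g(y)$- and $g(y')$-components of $\Psi_K$ is precisely a robust ratio between $y$ and $y'$ in $\M$.

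For part~(1), I would use that $\mathrm{Im}(\mA_\kappa)$ is always contained in the complex-difference space $\Delta = \mathrm{span}\{\omega_{\tilde y'} - \omega_{\tilde y} : \tilde y \to \tilde y' \in \tR\}$, while a stoichiometric deficiency $\tilde\delta = 0$ means $\ker \mY \cap \Delta = \{\mathbf{0}\}$. Since $\mA_\kappa \Psi_K(\vx) \in \ker \mY$ at steady state and also lies in $\Delta$, it must vanish, so $\Psi_K(\vx) \in \ker \mA_\kappa$; this is the generalized complex-balancing condition. Weak reversibility then forces $\ker \mA_\kappa$ to be spanned by vectors supported on single linkage classes and strictly positive there, with entries given by the tree constants of those classes, so within each linkage class $\Psi_K(\vx)$ is a positive scalar multiple of the tree-constant vector. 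Reading off two components $g(y), g(y')$ in a common linkage class, the scalar cancels and $\vx^y/\vx^{y'} = K_{g(y)}/K_{g(y')}$ is independent of $\vx$, the desired robust ratio.

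For part~(2), the same computation gives $\mA_\kappa \Psi_K(\vx) \in \ker \mY \cap \Delta$, which is now one-dimensional; fixing a spanning vector $v$, each positive steady state satisfies $\mA_\kappa \Psi_K(\vx) = \lambda(\vx)\, v$. Comparing two positive steady states $\vx, \vx^*$ and writing $\mu_i = \ln(x_i^*/x_i)$, so that the ratio of the $\tilde y$-components of $\Psi_K$ at $\vx^*$ and $\vx$ equals $e^{\mu \cdot h(\tilde y)}$, I would reproduce the Shinar--Feinberg deficiency-one argument: the support and sign structure of $v$ forced by $\tilde\delta = 1$ confines the nontrivial balancing to the nonterminal complexes and yields $\mu \cdot (h(\tilde y) - h(\tilde y')) = 0$ whenever $\tilde y, \tilde y'$ are nonterminal, i.e. $\vx^{y}/\vx^{y'}$ agrees at the two steady states. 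Translating back through $h(g(y)) = y$ gives the robust ratio for the corresponding pair of original complexes.

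I expect the main obstacle to lie in the deficiency-one case together with the improper case. The delicate point in part~(2) is that deficiency-one robustness is genuinely finer than complex balancing: one must control the spanning vector $v$ of $\ker \mY \cap \Delta$ and its interaction with the \emph{kinetic} complexes governing $\Psi_K$, even though $v$, the deficiency, and $\Delta$ are all determined by the \emph{stoichiometric} complexes. This stoichiometric/kinetic mismatch is exactly what the generalized setting introduces and is what must be handled carefully when transplanting the Shinar--Feinberg proof. The second subtlety is the improper-resolvable case: here $h$ is not $g^{-1}$, a source complex merged by $g$ need not be the chosen kinetic complex, and one must invoke resolvability to guarantee that the positive steady states of $\M$ really do solve $\mY \mA_\kappa \Psi_K(\vx) = \mathbf{0}$ before any of the above applies.
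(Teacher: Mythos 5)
Your proposal is correct and takes essentially the same route as the paper: positive steady states of $\M$ are transferred to the translated system $\tM$ (dynamical equivalence in the proper case, steady-state equivalence via resolvability in the improper case), and then the Shinar--Feinberg deficiency-zero and deficiency-one arguments are rerun with the kinetic monomial vector $\Psi_K(\vx)$ in place of $\Psi(\vx)$, which is exactly the paper's Theorem~\ref{thm:robustness_def0_def1_gma}. The only point worth adding is that the ``stoichiometric/kinetic mismatch'' you flag as the main obstacle in part (2) turns out to be harmless: the linear algebra used there (a basis of $\ker(\mA_\kappa)$ supported on the terminal strong linkage classes and the bound $\dim\ker(\mY\mA_\kappa)\leq 1+t$) involves only $\mY$, $\mA_\kappa$ and the graph structure, with the kinetic complexes entering solely through $\Psi_K(\bar{\vx})$ being a strictly positive vector of $\ker(\mY\mA_\kappa)$, which is why the paper can transplant Lemmas S3.19--S3.20 of Shinar--Feinberg essentially verbatim.
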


\noindent ACR of a species can be established by applying part $1$ or $2$ of Theorem~\ref{thm:robustness_translation} to determine complexes with robust ratios and then applying Lemma \ref{lemma1}.

For weakly reversible deficiency zero networks, we may use the network structure to not only establish that robustness holds, according to condition $1$ of Theorem \ref{thm:robustness_translation}, but also (a) to calculate the robust value at steady state, and (b) as a method for determining resolvability of translations. To do so, we introduce \emph{tree constants} $K(y)$, which are polynomials in the rate constants $k=(k_1, \ldots, k_m)$ of a weakly reversible network \citep{craciun2009toric,johnston2014translated}.
To calculate the tree constant $K(y)$ associated to the complex $y$, we let $\mathcal{L}(y)$ denote the linkage class containing $y$
and define a \emph{spanning ${y}$-tree} $\mathcal{T}$ to be a subgraph of the graph of complexes that contains all complexes in the linkage class, admits no cycles, and has the complex $y$ as its unique sink.
Denote by $\mathcal{T}(y)$ the set of all spanning ${y}$-trees.
Then $K(y)$ is given by
\begin{equation}\label{def:tc}
  K(y) = \sum_{\mathcal{T} \in \mathcal{T}(y)} \prod_{r \in \mathcal{T}} k_r.
\end{equation}
Equivalently, $K(y)$ can be found, modulo a sign, by considering the
restriction of the kinetic matrix $\mA_\kappa$ to $\mathcal{L}(y)$, and
calculating the minor obtained by deleting the column corresponding to $y$ and
any of the rows~\citep{craciun2009toric,johnston2014translated}.

For translated networks $\tN$ and the corresponding translated mass action systems $\tM$, we define $\tilde{K}(\tilde{y})$ equivalently to \eqref{def:tc} where $\tilde{y}$ is the stoichiometric complex occupying a given vertex and the rate constants $\tilde{k}$ are defined by \eqref{eq:translation_kin_par}.

We have the following result regarding the computation of the steady state value of
robust ratios. The proof is given in Appendix~\ref{app:proofs2}.

\begin{theorem}
\label{thm:resolvability_translation}
Consider a chemical reaction network $\N$ with corresponding mass action system $\M$ and rate constant vector $k \in \mathbb{R}_{> 0}^m$. Let $\tN$ be a translation of $\N$ which has a stoichiometric deficiency of zero and is weakly reversible, and let $\tM$ be an associated translated mass action system with rate constant vector $\tilde{k} \in \mathbb{R}_{> 0}^{\tilde{m}}$ defined according to \eqref{eq:translation_kin_par}. Then we have the following:
\begin{enumerate}
\item
  Suppose that $h(\tilde{y}) = y$ and $h(\tilde{y}')= y'$, where $\tilde{y}, \tilde{y}' \in \tC$ belong to a common linkage class in $\tN$.
  Also suppose one of the following:
\begin{enumerate}
\item[(a)]
$\tN$ is proper; or
\item[(b)] 
$\tN$ is improper and resolvable with the rate constants $k_r^*$, $r \in \mathcal{R}_I$, in \eqref{eq:translation_kin_par} chosen so that steady state equivalence of $\M$ and $\tM$ holds; or
\item[(c)]
$\tN$ is improper and the ratio of
tree constants $\tilde{K}(\tilde{y})/\tilde{K}(\tilde{y}')$ does not depend upon any rate constant $k_r^*$ corresponding to a reaction $r \in \mathcal{R}_I$.
\end{enumerate}
Then $y$ and $y'$ have a robust ratio in $\M$ and, in particular, at every positive steady state of $\M$, we have
\[\frac{\mathbf{x}^{y}}{\mathbf{x}^{y'}} = \frac{\tilde{K}(\tilde{y})}{\tilde{K}(\tilde{y}')}.\]
\item
\begin{enumerate}
\item[(a)]
Suppose that $\tN$ is improper and that there is a pair of complexes $y, y' \in \C$ such that $g(y) = g(y')$ for which
\[y - y' = \sum_{\theta=1}^{\tilde{l}} \mathop{\sum_{i,j=1}^{\tilde{c}}}_{\tilde{y}^i, \tilde{y}^j \in \tilde{\mathcal{L}}_\theta} c_{ij} (h(\tilde{y}^{i}) - h(\tilde{y}^{j}))\]
where $\tilde{\mathcal{L}}_\theta$, $\theta=1, \ldots, \tilde{l}$, are the linkage classes of $\tN$ and $c_{ij} \in \mathbb{R}$ are constants. Suppose also that the \emph{kinetic adjustment factor} $\tilde{K}(y,y')$ defined by
\begin{equation}
\label{def:kin_adj_factor}
\tilde{K}(y,y') = \prod_{\theta=1}^{\tilde{l}} \mathop{\prod_{i,j=1}^{\tilde{c}}}_{y_i, y_j \in \mathcal{L}_\theta} \left( \frac{\tilde{K}(\tilde{y}^i)}{\tilde{K}(\tilde{y}^j)} \right)^{c_{ij}}
\end{equation}
does not depend on any rate constant $k_r^*$ corresponding to a reaction $r \in \mathcal{R}_I$. Then $y$ and $y'$ have a robust ratio in $\M$ and, in particular, we have
\[\frac{\mathbf{x}^{y}}{\mathbf{x}^{y'}} = \tilde{K}(y,y').\]
\item[(b)]
Suppose that, for all reactions $r = y \to y' \in \mathcal{R}_I$, the conditions of part 2(a) hold for the source complex $y$ and the corresponding kinetic complex $h(g(y))$. Then $\tN$ is resolvable. In particular, the mass action system $\M$ is steady state equivalent to the translated mass action system $\tM$ with $k_i^* = \tilde{K}(y,h(g(y)))k_i$ where $\tilde{K}(y,h(g(y)))$ is the kinetic adjustment factor \eqref{def:kin_adj_factor}.
\end{enumerate}
\end{enumerate}
\end{theorem}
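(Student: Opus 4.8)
The engine of both parts is the classical description of the positive steady states of a weakly reversible, stoichiometric deficiency zero system, applied to the generalized system $\tM$. The plan is to first record one identity and then, in each case, relate the steady states of $\M$ to those of $\tM$. Because $\tN$ has stoichiometric deficiency zero, $\ker \tilde{\mY} \cap \mathrm{im}\,\tilde{\mA}_\kappa = \{0\}$, so every solution of $\tilde{\mY}\tilde{\mA}_\kappa \Psi_K(\vx)=0$ already satisfies the complex-balancing condition $\tilde{\mA}_\kappa \Psi_K(\vx)=0$ \citep{muller2012generalized}. Weak reversibility then lets me invoke the Matrix-Tree theorem \citep{craciun2009toric,johnston2014translated}: $\ker \tilde{\mA}_\kappa$ is spanned by one nonnegative vector per linkage class whose entries are exactly the tree constants $\tilde{K}(\tilde{y})$. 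Hence at any positive steady state of $\tM$ the monomial vector $\Psi_K(\vx)$ is, on each linkage class, proportional to the tree-constant vector, which gives the key identity
\[ \frac{\vx^{h(\tilde{y})}}{\vx^{h(\tilde{y}')}} = \frac{\tilde{K}(\tilde{y})}{\tilde{K}(\tilde{y}')} \qquad \text{whenever } \tilde{y},\tilde{y}' \text{ lie in a common linkage class of } \tN. \]

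For Part 1 it remains to transport this identity to the steady states of $\M$. In case (a) the translation is proper, so $\M$ and $\tM$ are dynamically equivalent \citep[Lemma~2]{johnston2014translated} and share every positive steady state, and the identity applies directly. In case (b) resolvability supplies a choice of the $k_r^*$ for which $\M$ and $\tM$ are steady state equivalent, and the identity again applies with its right-hand side evaluated at that choice. Case (c) is handled by a realization argument. Writing each vector field as a sum of (rate)$\times$(reaction vector) and using that translation preserves reaction vectors, the two right-hand sides differ only over the improper reactions, by
\[ \sum_{r\,:\,y\to y'\in\mathcal{R}_I}(y'-y)\bigl(k_r^*\,\vx^{h(g(y))}-k_r\,\vx^{y}\bigr). \]
Given any positive steady state $\vx^*$ of $\M$, I would choose $k_r^* = k_r\,(\vx^*)^{y}/(\vx^*)^{h(g(y))}>0$, which makes every summand vanish at $\vx^*$ and thus turns $\vx^*$ into a positive steady state of $\tM$. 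The identity then yields the ratio, evaluated at this $\vx^*$-dependent $k_r^*$; since by hypothesis $\tilde{K}(\tilde{y})/\tilde{K}(\tilde{y}')$ does not depend on the $k_r^*$, the value is one and the same constant for every $\vx^*$, which is exactly robustness.

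Part 2(a) reuses the realization argument: for a positive steady state $\vx$ of $\M$ I realize it as a steady state of $\tM$, apply the key identity to each factor $\vx^{h(\tilde{y}^i)}/\vx^{h(\tilde{y}^j)}$ appearing in the assumed decomposition of $y-y'$, and multiply. Since $y-y'=\sum c_{ij}\bigl(h(\tilde{y}^i)-h(\tilde{y}^j)\bigr)$ and $\vx>0$, the laws of exponents give
\[ \frac{\vx^{y}}{\vx^{y'}} = \prod_{\theta}\prod_{i,j}\left(\frac{\tilde{K}(\tilde{y}^i)}{\tilde{K}(\tilde{y}^j)}\right)^{c_{ij}} = \tilde{K}(y,y'), \]
and independence of $\tilde{K}(y,y')$ from the $k_r^*$ promotes this to a genuine constant. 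Part 2(b) then closes the loop. Applying 2(a) to the source complex $y$ and kinetic complex $h(g(y))$ of each improper reaction $r\in\mathcal{R}_I$ --- legitimate because property (iii) gives $g(h(g(y)))=g(y)$, so the pair shares a $g$-image --- yields $\vx^{y}/\vx^{h(g(y))}=\tilde{K}(y,h(g(y)))$ at every positive steady state of $\M$. Setting $k_r^*=\tilde{K}(y,h(g(y)))\,k_r$ then forces $k_r^*\,\vx^{h(g(y))}=k_r\,\vx^{y}$ on these steady states, so each improper summand in the displayed difference vanishes and every positive steady state of $\M$ is one of $\tM$; the reverse inclusion follows symmetrically, starting from the key identity on $\tM$. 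This is precisely steady state equivalence, i.e.\ resolvability.

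I expect the main obstacle to be the bookkeeping of the dependence on the free rate constants $k_r^*$. The tree constants $\tilde{K}(\tilde{y})$ themselves depend on the $k_r^*$ through \eqref{eq:translation_kin_par}, so the key identity typically holds only for a $k_r^*$ that varies with the chosen steady state; the scheme produces a well-defined robust value precisely because the stated hypotheses --- independence of the relevant ratio, or of the kinetic adjustment factor $\tilde{K}(y,y')$, from the $k_r^*$ --- are exactly what collapses that dependence into a single number. A secondary technical point arises in the realization step when several reactions, proper and improper, are merged onto one generalized reaction: one must then match the aggregate coefficient of the shared reaction vector rather than cancel reaction by reaction, but since all reactions mapping to a common $\tilde{r}$ carry the same reaction vector this introduces no essential difficulty.
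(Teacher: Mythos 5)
Your proposal is correct and follows essentially the same route as the paper's proof: deficiency zero reduces steady states to complex balancing, the Matrix--Tree characterization of $\ker\tilde{\mA}_\kappa$ yields the tree-constant identity per linkage class, and improper translations are handled by evaluating the improper rates at the steady state itself. Your ``realization argument'' (choosing $k_r^* = k_r\,(\vx^*)^{y}/(\vx^*)^{h(g(y))}$ for each fixed steady state $\vx^*$) is exactly the paper's device of state-dependent rates $k_r(\vx)$ in its rewritten kinetic matrix $\tilde{\mA}_\kappa(\vx)$, just phrased pointwise rather than as a matrix-valued function, and the case analysis in parts 1(a)--(c), 2(a), and 2(b) (including the symmetric converse for resolvability) matches the paper step for step.
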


\noindent For improper translations, Theorem \ref{thm:resolvability_translation} can be used to generate robust ratios directly (part 1(c) and 2(a)), or to first demonstrate resolvability through part 2(b) and then establish robust ratios through part 1(b). Notably, unlike previous results stemming from network translation, Theorem \ref{thm:resolvability_translation} part 1(c) does not depend upon the translation being resolvable to guarantee that certain complexes have robust ratios.

\section{Computational approach}
\label{sec:translation_algo}

\noindent When applying Theorems \ref{thm:robustness_translation} and \ref{thm:resolvability_translation}, the translation $\tN$ is typically not a priori known and therefore must be constructed. In this section we describe an algorithmic approach to such a construction.
The ideas outlined here can be used to encode the problem as a mixed integer linear program (MILP).
A more detailed description of the framework can be found in Appendix~\ref{app:code}.

 The method for identification of translations that we are going to describe relies on the computation of the \emph{elementary modes} of the network.
In order to introduce this definition, we first need to define a pair of structural matrices relevant to a chemical reaction network $\N$. For fixed reaction and complex orders $\mathcal{R} = \{ r_1, \ldots, r_m \}$ and $\mathcal{C} = \{ y^1, \ldots, y^c\}$, we define: (1) the {\it stoichiometric matrix} $\mGamma \in \mathbb{Z}^{n \times m}$ to be the matrix where the $i^{th}$ column is given by $\Gamma_{i} = y'-y$ for $r_i : y \to y'$; and (2) the \emph{incidence matrix} $\mI_a \in \mathbb{Z}^{c \times m}$ to be the matrix with entries ${(\mI_a)}_{ij} = -1$ if $y^i$ is the reactant complex of reaction $r_j$, ${(\mI_a)}_{ij} = 1$ if $y^i$ is the product of reaction $r_j$, and ${(\mI_a)}_{ij} = 0$ otherwise. It can be shown that $\mGamma = \mY \mI_a$ where $\mY$ is the complex matrix of $\N$. The \emph{support} of a vector $w \in \mathbb{R}^m$ is defined as the set $\{i=1,\dots,m\ | \ w_i\neq 0\}$.

The set $\{v \in \mathbb{R}^{m}_{\geq 0}\ |\ v \in \mbox{ker}(\mGamma)\}$ is a pointed polyhedral cone, and admits a finite set of generators which are unique up to scalar multiplication.
We call the generators \emph{elementary modes} for $\N$; they are the vectors of the cone with minimal support with respect to inclusion.
An elementary mode $w$ and is said to be \emph{cyclic} if $w \in \mbox{ker}(\mI_a)$, and \emph{stoichiometric} otherwise.

\subsection{Linear Programming Framework}

\noindent The goal of the algorithm is to identify translation complexes $\upsilon^1, \ldots, \upsilon^m \in \mathbb{Z}^m$, which translate the reactions $r_1,
\ldots, r_m$ according to the translation scheme \eqref{eq:prop_trans}, to construct a translated network which has as low of a stoichiometric deficiency as possible.
The observation behind the method, made in~\citet[Remark $11$]{johnston2014translated}, can be restated as follows.
If the deficiency of a network is zero, then every elementary mode is a cycle.
If instead the network has strictly positive deficiency, then it may possess a stoichiometric elementary mode.
To find a network with lower deficiency, we therefore attempt to convert stoichiometric elementary modes in the original network into cyclic elementary modes in the translation.

The method applies to reaction networks admitting stoichiometric elementary modes that satisfy the hypotheses of the following result.

\begin{lemma}\label{lem:ems_to_cycles}
  Let $\N$ be a chemical reaction network.
  Suppose that $w \in \mbox{ker}(\mGamma) \cap \mathbb{R}^m_{\geq 0}$ has support on $I \subseteq \{ 1, \ldots, m \}$, and that $w_i = 1$ for all $i \in I$.
  Then there exists a translation $\tN$ for the network $\N$ such that $\tN$ admits a cycle with support on $I$.
\end{lemma}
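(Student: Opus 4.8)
The plan is to reduce the statement to a purely combinatorial claim about the translated graph of complexes. Writing each reaction of $I$ as $r_i : y_i \to y_i'$, the hypothesis $w \in \ker(\mGamma)$ with $w_i = 1$ on $I$ says precisely that $\sum_{i \in I}(y_i' - y_i) = \sum_{i \in I}\Gamma_i = 0$. Recalling that a \emph{cycle} is a nonnegative element of $\ker(\mI_a)$, the goal is to choose translation complexes $\upsilon^i \in \mathbb{Z}_{\geq 0}^n$ so that, after translating $r_i$ to $y_i + \upsilon^i \to y_i' + \upsilon^i$ as in \eqref{eq:prop_trans}, the indicator vector of $I$ lies in the kernel of the incidence matrix of $\tN$. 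Concretely, this asks that every translated complex occur equally often as a reactant and as a product among the reactions indexed by $I$, i.e.\ that these translated reactions assemble into a balanced (Eulerian) subgraph.

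First I would build the translation by tracing a closed walk in the lattice $\mathbb{Z}^n$. Fix a cyclic ordering $i_1, \ldots, i_{|I|}$ of $I$, pick a base complex $C_0 \in \mathbb{Z}^n$, and set $C_j = C_0 + \sum_{l \leq j} \Gamma_{i_l}$; the identity $\sum_{i \in I}\Gamma_i = 0$ guarantees $C_{|I|} = C_0$, so the walk closes up. Declaring $r_{i_j}$ to run from $C_{j-1}$ to $C_j$ forces $\upsilon^{i_j} = C_{j-1} - y_{i_j}$, and one checks $y_{i_j}' + \upsilon^{i_j} = C_{j-1} + \Gamma_{i_j} = C_j$, so reaction vectors are preserved (condition (i) of Definition~\ref{def:translation}). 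Choosing $C_0$ to be a sufficiently large constant vector makes every $\upsilon^i$ lie in $\mathbb{Z}_{\geq 0}^n$, which is legitimate because adding a common constant to all base points shifts every translated complex equally and preserves the chaining. Reactions outside $I$ are translated trivially ($\upsilon = 0$). I would then read off $f$ (the identity on indices), $g(y_{i_j}) = C_{j-1}$, and the kinetic map $h$ from the scheme, and verify conditions (ii)--(iii); by construction the reactions of $I$ then form the cycle $C_0 \to C_1 \to \cdots \to C_{|I|} = C_0$.

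The hard part will be the combinatorial realizability of this ordering, and this is where the structure of the support must genuinely be used. The map $g$ is defined on \emph{source complexes}, so any two reactions $r_i, r_{i'} \in I$ with the same reactant, $y_i = y_{i'}$, are forced to receive a common translation complex and hence must emanate from the same translated complex along the walk. A naive ordering producing a \emph{simple} cycle with distinct $C_j$ is therefore generally impossible, and one must instead route the closed walk so that all reactions sharing a source leave from a common vertex while the walk remains balanced there. Establishing that such a consistent routing always exists is the crux: it amounts to showing that the defect vector $z = \mI_a w \in \ker(\mY)$ can be annihilated by translation, and it is exactly here that the hypothesis on $w$ (its support structure, and in the intended application its being an elementary mode) must do the work. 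The remaining bookkeeping---verifying Definition~\ref{def:translation} and the nonnegativity shift---is routine by comparison.
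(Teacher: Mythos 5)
Your first two paragraphs are exactly the paper's proof. The paper orders the support as $r_1,\dots,r_k$ and takes $\upsilon^{j} = \sum_{i<j} y_i' + \sum_{i>j} y_i$; this is precisely your walk with the specific base point $C_0 = \sum_{i \in I} y_i$, a choice that makes every translation complex and every translated complex automatically nonnegative, so no shifting argument is needed. Up to that cosmetic difference the constructions coincide, and the verification that the translated reactions chain into a closed cycle is the same computation.

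The divergence is your third paragraph, and there your instinct is sound but your expectation is not: the ``consistent routing'' you hope to extract cannot be established in general, and the paper does not establish it either --- its proof simply never checks that $g$ is well defined, which is exactly the issue you raise. In fact the obstruction is fatal for the statement read literally. Take $\N$ with reactions $r_1: A \to B$, $r_2: A \to C$, $r_3: B+C \to 2A$, so that $w = (1,1,1) \in \ker(\mGamma)$ with support $I = \{1,2,3\}$. By Definition~\ref{def:translation}(i)--(ii), any translation sends $r_1$ and $r_2$ to distinct reactions (their reaction vectors differ) emanating from the common source $g(A)$, so among the three translated edges the vertex $g(A)$ has out-degree $2$; its in-degree is at most $1$, since the targets of $f(r_1)$ and $f(r_2)$ differ from $g(A)$ by $B - A \neq 0$ and $C - A \neq 0$, leaving only $f(r_3)$ as a possible incoming edge. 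Hence no translation of this network admits a cycle with support on $I$. The lemma is therefore correct (and the paper's construction is a complete proof) only under the implicit additional hypothesis that the reactions in $I$ have pairwise distinct source complexes: then the assignment $j \mapsto \upsilon^j$ is consistent, $g$ is well defined (translate any reaction outside $I$ sharing a source with $r_j$ by the same $\upsilon^j$, and all remaining reactions by $0$), and your walk closes the argument. This reading is consistent with the paper's own MILP formulation, which imposes the shared-source constraint (React) and only \emph{maximizes} the number of stoichiometric elementary modes translated to cycles, rather than asserting that all of them always can be. So where you commit to an argument you match the paper; the gap you flag is real, but the correct resolution is to add the distinct-sources hypothesis, not to search for a cleverer routing.
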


\begin{proof}
Without loss of generality, assume that $w$ has support on the first $k = |I|$ reactions.
Since $w \in \mbox{ker}(\mGamma)$, we have that
\begin{equation}\label{eq:sum_r_sum_p}
  \mathop{\sum_{i = 1}^k}_{r_i: y \to y'} y = \mathop{\sum_{i = 1}^k}_{r_i: y \to y'} y'.
\end{equation}
Consider, for $r_1,\dots,r_k$, the translation complexes $\upsilon^1,\dots,\upsilon^k$ defined as
\[\upsilon^{j} = \mathop{\sum_{i=1}^{j-1}}_{r_i: y \to y'} y' + \mathop{\sum_{i=j+1}^k}_{r_i:y \to y'}y, \; j =1, \ldots, k.\]
This translation consists of all the products of the reactions preceding $r_j$,
  plus all the reactants of the reactions following $r_j$.
Call $\tilde{r}_1,\dots,\tilde{r}_k$ the resulting translated reactions.
  With this translation scheme, for the reactions $\tilde{r}_j = \tilde{y} \to \tilde{y}'$ and $\tilde{r}_{j+1} = \hat{y} \to \hat{y}'$, we have
    \[\tilde{y}' = \mathop{\sum_{i=1}^{j}}_{r_i: y \to y'} y' + \mathop{\sum_{i=j+1}^k}_{r_i: y \to y'} y = \hat{y}, \; j=1,\ldots, k-1\]
and for $\tilde{r}_k = \tilde{y} \to \tilde{y}'$ and $\tilde{r}_1 = \hat{y} \to \hat{y}'$ we have
\begin{equation}
\label{eq:second}
\tilde{y}' = \mathop{\sum_{i = 1}^k}_{r_i: y \to y'} y' = \mathop{\sum_{i = 1}^k}_{r_i: y \to y'} y = \hat{y}
\end{equation}
  where \eqref{eq:second} follows from~\eqref{eq:sum_r_sum_p}.
  It follows that the reactions $\tilde{r}_{1}, \dots, \tilde{r}_{k}$ define a cycle in $\tN$ and we are done.
\end{proof}

\begin{remark}\label{rmk:min_transl}
  Consider the setting of Lemma \ref{lem:ems_to_cycles}, and define the translation complex $\tilde{\upsilon}$ with entries
  \[\tilde{\upsilon}_l = \min_{1 \leq j \leq k}\left(\mathop{\sum_{i=1}^{j-1}}_{r_i: y \to y'} y_l' + \mathop{\sum_{i=j+1}^k}_{r_i: y \to y'} y_l\right).\]
  If the reactions $\tilde{r}_i$, $i=1,\dots,k$, are further  translated by the complex $-\tilde{y}$,
the result is still a cycle with complexes with nonnegative stoichiometry. {\hfill $\square$}
\end{remark}

\begin{remark}\label{rmk:order_reacts}
The translation scheme identified in Lemma~\ref{lem:ems_to_cycles} depends on the order chosen for the reactions in the support of the elementary mode $w$. {\hfill $\square$}
\end{remark}

Proposition~\ref{lem:ems_to_cycles} suggests a technique for identifying the translation complexes $\upsilon^1$, \ldots, $\upsilon^m$
as solutions of a system of linear equations. Specifically, suppose that the indices $\{j_1, \dots, j_k\} \subseteq \{ 1, \ldots, m\}$ identify a stoichiometric
elementary mode, and that $\hat{\upsilon}^{j_1}, \dots, \hat{\upsilon}^{j_k}$ are translation complexes specific to this stoichiometric elementary mode such that the
reactions $r_{j_1} + \hat{\upsilon}^{j_1}, \dots, r_{j_k} + \hat{\upsilon}^{j_k}$ form a cycle by Proposition \ref{lem:ems_to_cycles}.
Then, to guarantee that $r_{j_1}, \dots, r_{j_k}$ are translated to this cycle in $\tN$, we impose that
\begin{equation}\label{eq:translation_complexes}
  \begin{aligned}
	& \upsilon_i^{j_1} - \hat{\upsilon}_i^{j_1} = \upsilon_i^{j_2} - \hat{\upsilon}_i^{j_2} = \cdots = \upsilon_i^{j_k} - \hat{\upsilon}_i^{j_k}, \mbox{ for } i = 1, \dots, n.
  \end{aligned}
\end{equation}
The constraint set \eqref{eq:translation_complexes} only imposes that reactions in the stoichiometric elementary mode, $r_{j_1}, \ldots, r_{j_k}$, are translated to a cycle.
In general, we will have multiple stoichiometric elementary modes, each with their set of translated complexes $\hat{\upsilon}_{j_1}, \ldots, \hat{\upsilon}_{j_k}$, and it might not be possible
to satisfy the constraint \eqref{eq:translation_complexes} simultaneously for all such elementary stoichiometric modes.

The following are other considerations which are instrumental in generating the MILP system for determining a translation $\tN$ suitable for application of Theorems \ref{thm:robustness_translation} and \ref{thm:resolvability_translation} from a given chemical reaction network $\N$. The list of constraints capable of implementing these conditions in a MILP framework are presented in Appendix \ref{app:code}. The corresponding constraint sets are indicated in brackets.
\begin{itemize}
  \item[(i)] Reactions with a common source complex in $\N$ are translated by the same translation complex \textbf{(\hyperlink{c:same_reactant}{React})}.
  \item[(ii)] Reactions in a cycle in $\N$ are translated by the same translation complex~\textbf{(\hyperlink{c:cycle}{Cycle})}.
  \item[(iii)] Optionally, permutations of the reactions in the support of a stoichiometric mode can be considered \textbf{(\hyperlink{c:perm1}{Perm1})}, \textbf{(\hyperlink{c:perm2}{Perm2})}, \textbf{(\hyperlink{c:perm3}{Perm3})}, \textbf{(\hyperlink{c:perm4}{Perm4})}.
  \item[(iv)] Optionally, we can require the translation to be proper \textbf{(\hyperlink{c:proper1}{Proper1})}, \textbf{(\hyperlink{c:proper2}{Proper2})}.
  \item[(v)] The number of stoichiometric elementary modes that are translated to cycles is maximized \textbf{(\hyperlink{obj:max_transl}{Obj})}.
\end{itemize}

\subsection{Examples}

In this section, we illustrate the application, subtleties, and limitations of Theorem \ref{thm:robustness_translation}, Theorem \ref{thm:resolvability_translation}, and the computational algorithm outlined in Section \ref{sec:translation_algo} through examples.

\begin{example}
Consider the network \eqref{eq:proper_acr_intro} from the introduction. The translation~\eqref{eq:proper_acr_intro_translated} is proper, has a stoichiometric deficiency of zero and is weakly reversible.
Since the translated reaction network consists of a single linkage class, Theorem~\ref{thm:robustness_translation} part $1(a)$ guarantees that the complexes $A + B$, $C$, $D$ and $A$ have a robust ratio in $\M$. 
This implies the ACR in $B$. We can find the steady state value for $B$ by calculating the ratio between the tree constants $\tilde{K}(A+B)$ and $\tilde{K}(2A)$ in $\tN$. We have
\[\begin{split} \tilde{K}(A+B) & = k_3k_4k_5, \\ \tilde{K}(2A) & = (k_1+k_2)k_3k_4,\end{split}\]
so that
\[x_B = \frac{x_Ax_B}{x_A} = \frac{\tilde{K}(A+B)}{\tilde{K}(2A)} = \frac{k_5}{k_1+k_2}.\]
\end{example}

\begin{example}\label{ex:envz4}
  Reconsider the network from Examples \ref{ex:envz}, \ref{ex:envz2}, and \ref{ex:envz3}.
We previously used the translation scheme \eqref{eq:envz_ompr} to obtain the improper translated chemical reaction network \eqref{eq:envz_ompr_gma}.
We now use Theorem \ref{thm:resolvability_translation} to general robust ratios, establish resolvability, and compute the ACR value of $Y_p$ \eqref{eq:yp_value}.

First of all, notice that the kinetic complexes $\AIII+\AVII$ and $\AIII$ differ in only $\AIII$. We would be able to establish ACR in $\AVII$ directly by Theorem \ref{thm:resolvability_translation} part 1(c) if the ratio of tree constants for the corresponding complexes in \eqref{eq:envz_ompr_gma} did not depend on $k_{12}^*$ (since $\mathcal{R}_I = \{ r_{12} \}$). We have
\[\begin{split}
\tilde{K}(\AI + 2\AIII + \AV) & = k_1k_3k_6k_8(k_9k_{11}(k_{13}+k_{14})+(k_{10}+k_{11})k_{12}^*k_{14}),\\
\tilde{K}(\AI + \AII + \AIII + \AVII) & = k_1k_3k_5k_6k_8(k_{10}+k_{11})(k_{13}+k_{14}),
\end{split}\]
so that
\begin{equation}
\label{eq:int}
\frac{\tilde{K}(\AI + \AII + \AIII + \AVII)}{\tilde{K}(\AI + 2\AIII + \AV)} = \frac{k_5(k_{10}+k_{11})(k_{13}+k_{14})}{k_9k_{11}(k_{13}+k_{14})+(k_{10}+k_{11})k_{12}^*k_{14}}.
\end{equation}
Since this depends on $k_{12}^*$, we may not establish ACR directly from Theorem \ref{thm:resolvability_translation} part 1(c). We must first establish that the translation is resolvable by Theorem \ref{thm:resolvability_translation} part 2(b).

We determine complexes which have a robust ratio by Theorem \ref{thm:resolvability_translation} by computing ratios of tree constants from \eqref{eq:envz_ompr_trans1}. We have:
\[ \left\{
\begin{array}{ll}
\displaystyle{ \frac{\tilde{K}(2\AI+\AIII+\AV)}{\tilde{K}(\AI+\AII+\AIII+\AV)} = \frac{k_2}{k_1}, \; \; \;} & \displaystyle{\frac{\tilde{K}(\AI+\AII+\AIII+\AV)}{\tilde{K}(\AI+2\AIII+\AV)} = \frac{k_4+k_5}{k_3},} \\
\displaystyle{ \frac{\tilde{K}(\AI+2\AIII+\AV)}{\tilde{K}(\AI+\AIII+\AIV+\AV)} = \frac{k_6k_8}{k_5(k_7+k_8)}, \; \; \;} & \displaystyle{\frac{\tilde{K}(\AI+\AIII+\AIV+\AV)}{\tilde{K}(\AI+\AIII+\AVI)} = \frac{k_7+k_8}{k_6}.}
\end{array}
\right. \]
    Since these ratios do not depend on $k^*_{12}$, we have that the corresponding pairs of kinetic complexes have robust ratios. From part 1 of Theorem
\ref{thm:resolvability_translation}, we therefore have that $\M$ has a robust ratio in each pair of complexes in $\{\AI, \AII, \AIII, \AIV+\AV, \AVI \}$.

Since $g(\AIII+\AVII) = g(\AI+\AVII)$, we now consider Theorem \ref{thm:resolvability_translation} part 2(a).
Since the difference $(\AIII+\AVII) - (\AI+\AVII)=\AIII-\AI$ is in $R$, we have that $\AIII+\AVII$ and $\AI+\AVII$ have a robust ratio.
Specifically, we have
\begin{equation}
\label{envz_robust}
\frac{x_{\AI}x_{\AVII}}{x_{\AIII}x_{\AVII}} = \frac{\tilde{K}(2\AI+\AIII+\AV)}{\tilde{K}(\AI+2\AIII+\AV)} = \frac{k_2(k_4+k_5)}{k_1k_3}
\end{equation}
at every positive steady state of the mass action system associated with \eqref{eq:envz_ompr}. Since these were the only complexes mapped to the same complex via $g$, from Theorem \ref{thm:resolvability_translation} part 2(b) we have that the translation is resolvable.
In particular, the generalized mass action system associated with \eqref{eq:envz_ompr_trans1} is steady state equivalent to \eqref{eq:envz_ompr} for the rate constant choice $k_{12}^* = \frac{k_2(k_4+k_5)}{k_1k_3} k_{12}$. 
    We can then apply Theorem \ref{thm:resolvability_translation} part 1(b) to determine the value of $Y_p$ at steady state by
    substituting this value into \eqref{eq:envz_ompr_gma}. We have that
    \begin{equation*}
      \begin{aligned}
    x_{\AVII} & = \frac{k_5(k_{10}+k_{11})(k_{13}+k_{14})}{k_9k_{11}(k_{13}+k_{14})+(k_{10}+k_{11})k_{12}^*k_{14}}\\
     & = \frac{k_{1}k_{3}k_{5}(k_{10} + k_{11})(k_{13} + k_{14})}{k_{1}k_{3}k_{9}k_{11}(k_{13} + k_{14}) + k_{2}(k_{4} + k_{5})(k_{10} + k_{11})k_{12}k_{14}},
      \end{aligned}
    \end{equation*}
    which is the value previously obtained in \cite{ShinarFeinberg2010Robustness} and \cite{karp2012complex}.
\end{example}

\begin{example}
\label{ex:transl_0_1}
Consider the following network:
  \begin{equation}\label{eq:transl_0_1}
  \begin{tikzcd}[column sep=small]
    A \arrow[r, "r_1"] & B \arrow[r, "r_2"] & C, \\[-0.2in]
    2C \arrow[r, "r_3"] & B + C, & \\[-0.2in]
    A + C \arrow[r, rightharpoonup, yshift=0.2ex, "r_4"] &
    D \arrow[l, rightharpoonup, yshift=-0.2ex, "r_5"] \arrow[r, "r_6"] & 2A,
  \end{tikzcd}
  \end{equation}
and a mass action system on this network with kinetic parameters $k = (k_1,
\dots, k_6)$. The results of \cite{ShinarFeinberg2010Robustness} do not identify ACR in any species for the mass action system associated with \eqref{eq:transl_0_1}. The network is not weakly reversible and has a deficiency of $2$. The linear programming approach described in Section \ref{sec:translation_algo} identifies a
  deficiency zero translation which is improper, and a proper deficiency one
  translation. We demonstrate here how the results of Section~\ref{sec:transl_rob} can be
  applied to these translation schemes to identify ACR species and robust values.

\paragraph*{Deficiency one approach:} When restricting to proper translations, the computational program outlined in Section \ref{sec:translation_algo} determines the following translation scheme:
  \begin{equation*}\label{eq:transl_1}
  \begin{tikzcd}[column sep=small]
    A \arrow[r, "r_1"] & B \arrow[r, "r_2"] & C, & (+\emptyset), \\[-0.2in]
    2C \arrow[r, "r_3"] & B + C, & & (-C), \\[-0.2in]
    A + C \arrow[r, rightharpoonup, yshift=0.2ex, "r_4"] &
    D \arrow[l, rightharpoonup, yshift=-0.2ex, "r_5"] \arrow[r, "r_6"] & 2A, & (+\emptyset).
  \end{tikzcd}
  \end{equation*}
  This translation scheme identifies the following network:
  \begin{equation*}
  \begin{tikzcd}[column sep=scriptsize]
    \mbox{\ovalbox{$\begin{array}{c} A \\ (A) \end{array}$}} \arrow[r, "\tilde{r}_1"] & \mbox{\ovalbox{$\begin{array}{c} B \\ (B) \end{array}$}} \arrow[r, rightharpoonup, yshift=+0.2ex, "\tilde{r}_2"] \arrow[r, leftharpoondown, yshift=-0.2ex, "\tilde{r}_3"'] & \mbox{\ovalbox{$\begin{array}{c} C \\ (2C) \end{array}$}}, \\[-0.1in]
    \mbox{\ovalbox{$\begin{array}{c} A+C \\ (A+C) \end{array}$}} \arrow[r, rightharpoonup, yshift=0.2ex, "\tilde{r}_4"] &
    \mbox{\ovalbox{$\begin{array}{c} D \\ (D) \end{array}$}} \arrow[l, rightharpoonup, yshift=-0.2ex, "\tilde{r}_5"]  \arrow[r, "\tilde{r}_6"] & \mbox{\ovalbox{$\begin{array}{c} 2A \\ (-) \end{array}$}}.
  \end{tikzcd}
  \end{equation*}
  Notice that we do not need to assign a kinetic complex to $2A$ since it is not a source complex for any reaction.
  
  This translation is proper and has a stoichiometric deficiency of one.
  Since the complexes $A$, $A+C$, and $D$ are nonterminal,
  we conclude from Theorem~\ref{thm:robustness_translation}, part $2$,
  that these complexes have a robust ratio for the mass action system corresponding to \eqref{eq:transl_0_1}. As a consequence of $A+C$ and $A$ differing in only $C$, the mass action system has ACR in species $C$.
  
  \paragraph*{Deficiency zero approach:} If we do not restrict ourselves to proper translations, the computational program outlined in Section \ref{sec:translation_algo} determines the following translation scheme:
   \begin{equation}\label{eq:transl_0}
  \begin{tikzcd}[column sep=small]
    A \arrow[r, "r_1"] & B \arrow[r, "r_2"] & C, & (+A), \\[-0.2in]
    2C \arrow[r, "r_3"] & B + C, & & (+A-C) \\[-0.2in]
    A + C \arrow[r, rightharpoonup, yshift=0.2ex, "r_4"] &
    D \arrow[l, rightharpoonup, yshift=-0.2ex, "r_5"] \arrow[r, "r_6"] & 2A, & (+\emptyset).
  \end{tikzcd}
  \end{equation}
  This gives a network with $5$ complexes, one linkage class, and rank $4$ again, and therefore a deficiency of zero:
    \begin{equation}\label{eq:transl_0_improper}
  \begin{tikzcd}[column sep=normal]
    \mbox{\ovalbox{$\begin{array}{c} 2A \\ (A) \end{array}$}} \arrow[r, "\tilde{r}_1"] & \mbox{\ovalbox{$\begin{array}{c} A+B \\ (B) \end{array}$}} \arrow[d, rightharpoonup, xshift=+0.2ex, "\tilde{r}_2"]\\
    \mbox{\ovalbox{$\begin{array}{c} D \\ (D) \end{array}$}} \arrow[r, rightharpoonup, yshift=0.2ex, "\tilde{r}_5"] \arrow[u, "\tilde{r}_6"] & \mbox{\ovalbox{$\begin{array}{c} A+C \\ (2C)_{\tilde{r}_3} \\ (A+C)_{\tilde{r}_4} \end{array}$}} \arrow[l, rightharpoonup, yshift=-0.2ex, "\tilde{r}_4"] \arrow[u, rightharpoonup, xshift=-0.2ex, "\tilde{r}_3"]
  \end{tikzcd}
  \end{equation}
 The complexes $2C$ and $A+C$ are both translated to the complex $A+C$, i.e.,
 $g(2C) = g(A+C) = A+C$, so that the translation is improper. In order to correspond the translation scheme with a generalized network, we choose $h(A+C) = A+C$. This gives the following generalized network:
    \begin{equation}\label{eq:transl_0_improper_1}
  \begin{tikzcd}[column sep=normal]
    \mbox{\ovalbox{$\begin{array}{c} 2A \\ (A) \end{array}$}} \arrow[r, "\tilde{r}_1"] & \mbox{\ovalbox{$\begin{array}{c} A+B \\ (B) \end{array}$}} \arrow[d, rightharpoonup, xshift=+0.2ex, "\tilde{r}_2"]\\
    \mbox{\ovalbox{$\begin{array}{c} D \\ (D) \end{array}$}} \arrow[r, rightharpoonup, yshift=0.2ex, "\tilde{r}_5"] \arrow[u, "\tilde{r}_6"] & \mbox{\ovalbox{$\begin{array}{c} A+C \\ (A+C) \end{array}$}} \arrow[l, rightharpoonup, yshift=-0.2ex, "\tilde{r}_4"] \arrow[u, rightharpoonup, xshift=-0.2ex, "\tilde{r}_3^*"]
  \end{tikzcd}
  \end{equation}
 where the improperly translated reaction is indicated with an asterisk, i.e. $\mathcal{R}_I = \{ r_3 \}$.

 To enable the application Theorem~\ref{thm:resolvability_translation} part
 1(c), to demonstrate that two complexes have a robust ratio in $\M$, we need to ensure that the corresponding ratio of tree constants in \eqref{eq:transl_0_improper_1} do not depend upon $k_3^*$.
We can determine that
\[\begin{split}
\tilde{K}(2A) & = k_2k_4k_6,\\
\tilde{K}(A+C) & = k_1k_2(k_5+k_6),
\end{split}\]
so that
\[\frac{\tilde{K}(A+C)}{\tilde{K}(2A)} = \frac{k_1(k_5+k_6)}{k_4k_6}.\]
Since this ratio does not depend on $k_3^*$, it follows from Theorem \ref{thm:resolvability_translation} that $A+C$ and $C$ have a robust ratio in $\M$. More precisely, we have
\[x_C = \frac{k_1(k_5+k_6)}{k_4k_6}\]
at every positive steady state of $\M$. Notice that we were able to obtain this result even though we were not able to establish resolvability of the translation $\tN$ in \eqref{eq:transl_0_improper_1}. In fact, the mass action system $\M$ and translated mass action system $\tM$ corresponding to \eqref{eq:transl_0_improper_1} do not have the same positive steady states for any choice of $k_3^*$.
\end{example}

\section{Conclusion}
\label{sec:conclusions}

In this paper we have presented two results, Theorem \ref{thm:robustness_translation} and Theorem \ref{thm:resolvability_translation}, which allow the establishment and computation of robust ratios through a novel application of the process of network translation. Through application of Lemma \ref{lemma1}, the existence of ACR species may also be established. We have shown through several examples that these results expand the scope of networks which can be shown to have robust ratios and ACR through the previous network-based results of \citep{ShinarFeinberg2010Robustness}. We were able to use Theorem \ref{thm:resolvability_translation} to establish the ACR value of the robust species through analysis of the network structure rather than algebraic manipulation of the steady state equations. To aid in application, we have also presented a mixed integer linear programming framework for constructing network translations.

In the future, we plan to investigate the applicability of the methods discussed to the analysis of more complex models, for example of cell signalling pathways or population dynamics. 
Further avenues of research, which would expand the applicability of the theory of translated chemical reaction networks, include:
\begin{enumerate}
\item \emph{More general conditions for resolvability of improperly translated chemical reaction networks.} The strongest results from Theorem \ref{thm:robustness_translation} and Theorem \ref{thm:resolvability_translation} hold for improper and resolvable translations; however, little research has been conducted on methods for establishing resolvability more general than Theorem \ref{thm:resolvability_translation} part $2$. We therefore propose to study more general conditions under which resolvability can be established.
\item \emph{Alternative computational approaches for constructing network translations.} In particular, for large networks the full enumeration of elementary modes adds significantly to the computation time required of the algorithm. We therefore propose to research computational approaches to constructing network translations which do not depend on enumeration of the elementary modes.
  \item \emph{More general methods for corresponding mass action systems to generalized mass action systems.}
    Some mass action systems are dynamically equivalent to weakly reversible generalized mass action networks that cannot be described as translations of the original
    network. We intend to investigate more techniques for the identification of these correspondences, and potentially extend the results of the present and
    previous works~\citep{johnston2014translated,johnston2015computational} to these classes of networks.
\end{enumerate}

\section*{Acknowledgements}
MDJ was supported by the Henry Woodward Fund.
The authors thanks the anonymous reviewers for their helpful comments.

\appendix

\section{Proof of Theorem~\ref{thm:robustness_translation}}\label{app:proofs}

\noindent
  Note that, under the hypotheses of Theorem~\ref{thm:robustness_translation}, we can define a generalized mass action system $\tM$ on $\tN$ that
  is steady state equivalent to $\M$ by either Lemma $2$ or Lemma $4$ of~\cite{johnston2014translated}. Consequently, robust ratios and ACR hold in $\M$ if and only if they hold in $\tM$.
  Theorem~\ref{thm:robustness_translation} is therefore a direct corollary of the following result pertaining to generalized mass action systems. As the proof is nearly identical to that of Lemma S3.20 in the Supplemental Material of \cite{ShinarFeinberg2010Robustness}, we do not restate the background details and results.

\begin{theorem}
\label{thm:robustness_def0_def1_gma}
Consider a generalized chemical reaction network $\N$, with an associated
generalized mass action system $\M$ that admits a positive steady state.
\begin{enumerate}
\item
  If $\N$ has a stoichiometric deficiency of zero and is weakly reversible,
  then $\M$ has a robust ratio in each pair of kinetic complexes $y$ and $y'$ belonging to a common linkage class in $\N$.
\item
  If $\N$ has a stoichiometric deficiency of one,
  then $\M$ has a robust ratio in every pair of nonterminal kinetic complexes $y$ and $y'$ in $\N$.
\end{enumerate}
\end{theorem}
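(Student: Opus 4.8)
The plan is to reduce both parts to an analysis of the steady state equation $\mY\mA_\kappa\Psi_K(\vx)=0$, treating the monomial vector $\psi:=\Psi_K(\vx)\in\mathbb{R}^c_{>0}$ as an abstract positive vector; the whole argument then touches the stoichiometric data only through $\mY$ and the graph-with-rates only through $\mA_\kappa$, and this decoupling is exactly what lets the Shinar--Feinberg argument carry over to the generalized setting. First I would record two structural facts. Writing $\mI_a$ for the incidence matrix and using $\mGamma=\mY\mI_a$, a rank--nullity computation for $\mY$ restricted to $\mathrm{Im}\,\mI_a$ gives $\dim(\ker\mY\cap\mathrm{Im}\,\mI_a)=c-l-s=\delta$; and since every column $\mA_\kappa e_j=\sum_{r:\,y^j\to y^i}k_r(e_{y^i}-e_{y^j})$ is a combination of incidence columns, $\mathrm{Im}\,\mA_\kappa\subseteq\mathrm{Im}\,\mI_a$. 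Second, from the Laplacian (matrix--tree) structure underlying the tree constants, $\ker\mA_\kappa$ is spanned by vectors $K^{(p)}$ of tree constants, each supported on a single terminal strong linkage class; in particular every kernel vector vanishes on every nonterminal complex.

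For part $1$, weak reversibility makes each linkage class a terminal strong linkage class, and $\delta=0$ forces $\ker\mY\cap\mathrm{Im}\,\mI_a=\{0\}$, hence $\ker\mY\cap\mathrm{Im}\,\mA_\kappa=\{0\}$. The complex-formation vector $g:=\mA_\kappa\psi$ lies in this intersection, so $g=0$ and the steady state is complex balanced. Thus $\psi\in\ker\mA_\kappa=\mathrm{span}\{K^{(1)},\dots,K^{(l)}\}$, and comparing supports with $\psi>0$ gives $\psi|_{\mathcal{L}_\theta}=\alpha_\theta\,K^{(\theta)}|_{\mathcal{L}_\theta}$ with $\alpha_\theta>0$. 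Hence for vertices $a,b$ whose stoichiometric complexes $y^a,y^b$ lie in a common linkage class, $\psi_a/\psi_b=K(y^a)/K(y^b)$, a ratio of rate-constant polynomials independent of $\vx$. Since $\psi_a/\psi_b=\vx^{h(y^a)}/\vx^{h(y^b)}$, the kinetic complexes $h(y^a)$ and $h(y^b)$ have a robust ratio.

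For part $2$ I would not assume weak reversibility. If there are no nonterminal complexes there is nothing to prove, so assume one exists. Again $g=\mA_\kappa\psi$ lies in $\ker\mY\cap\mathrm{Im}\,\mA_\kappa\subseteq\ker\mY\cap\mathrm{Im}\,\mI_a$, of dimension at most $\delta=1$; but $g\neq 0$, since $g=0$ would put $\psi\in\ker\mA_\kappa$, forcing $\psi$ to vanish on the nonterminal complex and contradicting $\psi>0$. So the intersection is exactly one-dimensional, spanned by some $\xi\in\mathrm{Im}\,\mA_\kappa$, and every positive steady state satisfies $\mA_\kappa\psi\in\mathrm{span}(\xi)$. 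The preimage $\mA_\kappa^{-1}(\mathrm{span}\,\xi)=\ker\mA_\kappa\oplus\mathrm{span}(\zeta)$, where $\mA_\kappa\zeta=\xi$, is a fixed subspace; writing any steady state monomial vector as $\psi=\sum_p\alpha_p K^{(p)}+\beta\zeta$ and restricting to nonterminal indices, where every $K^{(p)}$ vanishes, gives $\psi_a=\beta\zeta_a$ for each nonterminal $a$ (so $\beta\neq0$ and $\zeta_a\neq0$ by positivity). Therefore $\psi_a/\psi_b=\zeta_a/\zeta_b$ is the same at every positive steady state for nonterminal $y^a,y^b$, which is the robust ratio $\vx^{h(y^a)}/\vx^{h(y^b)}$ between the nonterminal kinetic complexes.

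I expect the main obstacle to be the deficiency-one bookkeeping rather than any single hard estimate: one must check that $\mathrm{Im}\,\mA_\kappa\subseteq\mathrm{Im}\,\mI_a$ together with $\delta=1$ genuinely confines $g$ to a one-dimensional space, and---more delicately---that $\ker\mA_\kappa$ is supported entirely on terminal strong linkage classes, since it is precisely the vanishing of the kernel on nonterminal complexes that collapses the nonterminal block of $\psi$ onto the single direction $\zeta$. The remaining care is notational: because $\mY$, the deficiency, and the tree constants are all computed from the stoichiometric complexes and the reaction graph while $\psi$ records the kinetic monomials, each ratio $\psi_a/\psi_b$ must be read consistently as $\vx^{h(y^a)}/\vx^{h(y^b)}$, so that the conclusions are stated for the kinetic complexes exactly as in the statement.
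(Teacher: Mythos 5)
Your proof is correct and follows essentially the same route as the paper's: both parts rest on the kernel of $\mA_\kappa$ being spanned by tree-constant vectors supported on the terminal strong linkage classes, combined with the deficiency dimension count (giving $\ker(\mA_\kappa)=\ker(\mY\mA_\kappa)$, i.e.\ complex balancing, for part 1, and a one-dimensional ``nonterminal'' direction for part 2), with ratios of kinetic monomials then read off from a fixed vector. The only differences are presentational: the paper cites Lemmas S3.19--S3.20 of Shinar--Feinberg and Theorem 6 of Johnston (2014) for these facts and works with a basis $\{b^0,b^1,\dots,b^t\}$ of $\ker(\mY\mA_\kappa)$, whereas you rederive the dimension bounds via $\ker\mY\cap\mathrm{Im}\,\mI_a$ and phrase part 2 through the preimage $\mA_\kappa^{-1}(\mathrm{span}\,\xi)=\ker\mA_\kappa\oplus\mathrm{span}(\zeta)$ --- your $\zeta$ plays exactly the role of the paper's $b^0$.
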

\begin{proof}
  $(1)$ Consider the system of ODEs \eqref{eq:dyn_Ak_gen} corresponding to the generalized mass action system $\M$ and suppose that the network is weakly reversible and has a stoichiometric deficiency of zero. Since the network has a stoichiometric deficiency of zero, it follows that ker$(\mA_\kappa)$=ker$(\mY\mA_\kappa)$.
 Since the network is weakly reversible, ker$(\mY\mA_\kappa)$ may be characterized by Theorem $6$ of~\cite{johnston2014translated}. Furthermore, since the linkage classes provide a partition of ker$(\mY\mA_\kappa)$, by Theorem $3.3$ of~\cite{millan2012chemical} we have that the steady states are generated by binomials of the form
  \[K(y')\vx^{h(y)} - K(y)\vx^{h(y')}, \ y, y' \in \mathcal{L}_\theta, \ \theta = 1, \dots, l,\]
  where the $K(y)$ are the tree constants associated to the kinetic matrix of the
  generalized mass action system.
  As a consequence, at each positive steady state $\bar{\vx} \in \mathbb{R}_{> 0}^n$, and for each pair of complexes $y$,
  $y'$ in the same linkage class, we have
  \[\frac{\bar{\vx}^{h(y)}}{\bar{\vx}^{h(y')}} = \frac{K(y)}{K(y')}\]
  which concludes the proof.\\
  
\noindent (2): Consider the system of ODEs \eqref{eq:dyn_Ak_gen} corresponding to the generalized mass action system $\M$ and suppose that the network has a stoichiometric deficiency of one. From Lemma S3.19 of~\cite{ShinarFeinberg2010Robustness}, ker$(\mA_{\kappa})$
  admits a basis $\{b^1, \ldots, b^t\} \subset \mathbb{R}_{\geq 0}^c$ with support on the $t$ terminal strong
  linkage classes of $(\S, \C, \R)$.
  Lemma S3.20 in the same paper gives $\dim(\ker(\mY \mA_\kappa)) \leq 1 + t$.
  
 Now consider an arbitrary positive equilibrium $\bar{\vx} \in \mathbb{R}_{> 0}^n$. It follows that $\Psi_K(\bar{\vx}) \in \mbox{ker}(\mY \mA_\kappa) \cap \mathbb{R}_{> 0}^c$. Since no vectors in $\{ b^1, \ldots b^t \}$ have support on the nonterminal complexes, we have that $\dim(\ker(\mY \mA_\kappa)) = 1 + t$ so that there is a basis of $\mbox{ker}(\mY \mA_\kappa)$ given by $\{ b^0, b^1, \ldots, b^t\}$ where only $b^0 \in \mathbb{R}_{\geq 0}^c$ has support on the nonterminal complexes. It follows that
 \[\Psi_K(\bar{\vx}) = \lambda_0 b^0 + \sum_{\theta = 1}^t\lambda_\nu b^\theta.\]
  For any nonterminal complexes $y^i, y^j \in \C$, we have that $\bar{\vx}^{h(y^i)} = \lambda_0 b^0_i$ and $\bar{\vx}^{h(y^j)} = \lambda_0 b^0_j$ so that, after solving for $\lambda_0$ and rewriting, we have
  \[\frac{\bar{\vx}^{h(y^i)}}{\bar{\vx}^{h(y^j)}} = \frac{b^0_i}{b^0_j}.\]
  Since the positive equilibrium $\bar{\vx}$ and nonterminal complexes $y^i$ and $y^j$ were chosen arbitrarily, we are done.
\end{proof}

\section{Proof of Theorem~\ref{thm:resolvability_translation}}\label{app:proofs2}

\noindent \emph{Proof of Theorem~\ref{thm:resolvability_translation}}
Assume that $\tN$ is a translation of $\N$ which has a stoichiometric deficiency of zero and is weakly reversible, and that $\tM$ is the associated translated mass action system with rate constant vector $\tilde{k} \in \mathbb{R}_{> 0}^{\tilde{m}}$ defined according to \eqref{eq:translation_kin_par}. We start by rewriting the dynamical equations \eqref{eq:dynamics} associated with $\M$ in a manner which emphasizes the kinetic complexes of the translation, then considering the steady equations of the mass action system $\M$ when rewritten in this manner.

By considering reactions $r \in \mathcal{R}_I$ and $r \notin \mathcal{R}_I$ separately, the system \eqref{eq:dynamics} associated with $\M$ may be written
\begin{equation}\label{eq:sseqs}
  \begin{aligned}
  \frac{d\mathbf{x}}{dt} & = \sum_{r: y \rightarrow y' \in \mathcal{R}}  k_r (y' - y) \; \vx^y,
  \\ & = \sum_{\substack{r: y \rightarrow y' \in \mathcal{R} \\ r \notin \R_I}}  k_r (y' - y) \; \vx^{y} + \sum_{\substack{r: y \rightarrow y' \in \mathcal{R} \\ r\in \R_I}}  k_r (y' - y) \; \vx^{y},\\
  & = \sum_{\substack{r: y \rightarrow y' \in \mathcal{R} \\ r\notin \R_I}} k_r (y' - y) \; \vx^{y} + \sum_{\substack{r: y \rightarrow y' \in \mathcal{R} \\ r\in \R_I}} k_r(\mathbf{x}) (y' - y) \; \vx^{h(g(y))}.
  \end{aligned}
\end{equation}
where $k_r(\mathbf{x}) := \left( \vx^{y}/\vx^{h(g(y))} \right) k_r$ for $r: y \to y' \in \mathcal{R}_I$. This formulation emphasizes the dependence of \eqref{eq:dynamics} on the kinetic complexes of the translation, at the expense of having state-dependent rates $k_r(\mathbf{x})$ for $r \in \mathcal{R}_I$.

Denote by $\tilde{\mY}$ the matrix corresponding to the stoichiometric complexes of the translated reaction network,
and by $\tilde{\Psi}_K(\vx) = (\vx^{h(\tilde{y}^1)}, \dots, \vx^{h(\tilde{y}^{\tilde{c}})})$ the vector of monomials of the kinetic complexes. Denote by $\tilde{\mA}_\kappa(\vx)$ the kinetic matrix associated to the mass action system on the translated chemical reaction network with kinetic parameters defined by
\begin{equation}
\label{eq:adjusted}
  \tilde{k}_{\tilde{r}}(\mathbf{x}) = \sum_{\substack{r: y \to y' \in \R \setminus \mathcal{R}_I\\f(r) =\tilde{r}}}k_{r} + \sum_{\substack{r: y \to y' \in \mathcal{R}_I\\f(r) = \tilde{r}}} k_r(\mathbf{x}).
\end{equation}
We can then rewrite the dynamical equations \eqref{eq:sseqs} associated with $\M$ as
\begin{equation}
\label{eq:sseqs3}
\frac{d\mathbf{x}}{dt} = \tilde{\mY} \tilde{\mA}_{\kappa}(\vx) \tilde{\Psi}_K(\vx).
\end{equation}
On the other hand, the dynamical equations associated with $\tM$ can be written as
\begin{equation}
\label{eq:sseqs3bis}
\frac{d\mathbf{x}}{dt} = \tilde{\mY} \tilde{\mA}_{\kappa} \tilde{\Psi}_K(\vx),
\end{equation}
where the kinetic matrix $\tilde{\mA}_{\kappa}$ is defined using the kinetic parameters in~\eqref{eq:translation_kin_par}.
Hence the two systems~\eqref{eq:sseqs3} and~\eqref{eq:sseqs3bis} differ only in the rates corresponding to reactions $r \in \mathcal{R}_I$.
Recall that we write $\tilde{K}(y)$, $\tilde{y} \in \tC$, for the tree constants associated to $\tilde{\mA}_\kappa$.

Now suppose that $\bar{\vx} \in \mathbb{R}_{> 0}^n$ is a positive steady state for $\M$. From \eqref{eq:sseqs} and \eqref{eq:adjusted}, and noting that the stoichiometric deficiency of the translated reaction network is zero by assumption, we have that
\begin{equation}\label{eq:sseqs2}
  \begin{aligned}
    \tilde{\mY}\tilde{\mA}_\kappa(\bar{\vx})\tilde{\Psi}_K(\bar{\vx}) = \textbf{0} \; \Longleftrightarrow \; \tilde{\mA}_{\kappa}(\bar{\vx})\tilde{\Psi}_K(\bar{\vx}) = \mathbf{0}.
  \end{aligned}
\end{equation}
We define $K(\tilde{y}^i;\mathbf{x})$ to be the tree constant \eqref{def:tc} associated with the stoichiometric complex $\tilde{y}^i$ in the translated reaction network with the state-dependent rates \eqref{eq:adjusted}. By Lemma S3.19 of \citet{ShinarFeinberg2010Robustness}, $\mbox{ker}(\tilde{\mA}_\kappa(\bar{\vx}))$ admits a basis with support on the terminal strong linkage classes of the network, which coincide with the linkage classes of the translated reaction network by weak reversibility. By Theorem $6$ of~\cite{johnston2014translated}, this basis has the form $\{ \tilde{\mathbf{K}}^1(\bar{\vx}), \ldots, \tilde{\mathbf{K}}^{\tilde{l}}(\bar{\vx}) \} \subset \mathbb{R}_{\geq 0}^{\tilde{c}}$ where $\tilde{\mathcal{L}}_\theta$, $\theta = 1, \ldots, \tilde{l},$ are the linkage classes of $\tN$, and $\tilde{\mathbf{K}}^\theta_j(\bar{\vx}) = \tilde{K}(y^j;\bar{\vx})$ if $y^j \in \tilde{\mathcal{L}}_\theta$ and $\tilde{\mathbf{K}}^\theta_j (\bar{\vx})= 0$ otherwise.\\

\noindent \emph{Proof (1).} Consider complexes $y$, $y'$ and $\tilde{y}$, $\tilde{y}'$ as in point $(1)$.
Since the complexes $\tilde{y}$, $\tilde{y}'$ belong to the same linkage class and $y=h(\tilde{y})$, $y'=h(\tilde{y}')$, we have
\begin{equation}
\label{eq:ratio}
\frac{\bar{\vx}^y}{\bar{\vx}^{y'}} = \frac{\bar{\vx}^{h(\tilde{y})}}{\bar{\vx}^{h(\tilde{y}')}} = \frac{\tilde{K}(\tilde{y};\bar{\vx})}{\tilde{K}(\tilde{y}';\bar{\vx})}.
\end{equation}
Notice that the tree constants of $K(y^i;\bar{\vx})$ of $\tilde{\mA}_\kappa(\bar{\vx})$ and $K(y^i)$ of $\tilde{\mA}_\kappa$ are given by the same polynomials in the kinetic parameters. Consequently, their values differ only in the corresponding entries $k_r(\bar{\vx})$ and $k_r^*$ for $r \in \mathcal{R}_I$.\\

\noindent \emph{(a)} If the translation is proper then $\mathcal{R}_I = \emptyset$ and the kinetic matrices $\tilde{\mA}_\kappa(\vx)$ and
  $\tilde{\mA}_\kappa$ coincide. It follows trivially that the ratio \eqref{eq:ratio} does not depend upon $\bar{\vx}$ and reduces to $\tilde{K}(\tilde{y})/\tilde{K}(\tilde{y}')$.\\

\noindent \emph{(b)} Suppose that the translation is improper and resolvable with the rate constants $k^*_r$, $r \in \mathcal{R}_I$, in \eqref{eq:translation_kin_par} chosen such that $\M$ and $\tM$ are steady state equivalent. Consequently, we have that $\tilde{\mA}_\kappa\tilde{\Psi}_K(\bar{\vx})=0$. It follows that, for this choice of rate constants, the ratio \eqref{eq:ratio} does not depend on $\bar{\vx}$ and resolves to $\tilde{K}(\tilde{y})/\tilde{K}(\tilde{y}')$.\\

\noindent \emph{(b)} 
If the translation is improper, then the systems~\eqref{eq:sseqs3} and~\eqref{eq:sseqs3bis} do not necessarily admit the same positive steady states. However, if the ratio \eqref{eq:ratio} does not depend on any rate
constant $k_r$, $r \in \R_I$, then in particular it does not depend on $\bar{\vx}$, and is a ratio of polynomials in the $k_r$, $r \notin \R_I$, that coincides with $\tilde{K}(\tilde{y})/\tilde{K}(\tilde{y}')$.\\

\noindent \emph{Proof (2).} \emph{(a)} Suppose the pair of complexes $y$, $y'$ are translated to the same complex, and
\begin{equation}\label{eq:lin_comb}
  y - y' = \sum_{\theta=1}^{\tilde{l}} \mathop{\sum_{i,j=1}^{\tilde{c}}}_{\tilde{y}^i, \tilde{y}^j \in \tilde{\mathcal{L}}_\theta} c_{ij} (h(\tilde{y}^{i}) - h(\tilde{y}^{j}))
\end{equation}
for some constants $c_{ij} \in \mathbb{R}$.
Then, for any positive steady state $\bar{\vx} \in \mathbb{R}_{> 0}^n$ of $\M$, we can write
\[\frac{\bar{\vx}^y}{\bar{\vx}^{y'}} =\prod_{\theta=1}^{\tilde{l}} \mathop{\prod_{i,j=1}^{\tilde{c}}}_{y_i, y_j \in \mathcal{L}_\theta}\left( \frac{\bar{\vx}^{h(\tilde{y}^i)}}{\bar{\vx}^{h(\tilde{y}^j)}} \right)^{c_{ij}}.\]
As in part (1), since $\Psi_K(\bar{\vx}) \in \mbox{ker}(\tilde{\mA}_\kappa(\bar{\vx}))$, we have that at steady state 
\[ \frac{\bar{\vx}^{h(\tilde{y}^i)}}{\bar{\vx}^{h(\tilde{y}^j)}} = \frac{\tilde{K}(\tilde{y}^i;\bar{\vx})}{\tilde{K}(\tilde{y}^j;\bar{\vx})} ,\]
and as a consequence
\begin{equation}\label{eq:ratio_improper}
\frac{\bar{\vx}^y}{\bar{\vx}^{y'}} = \prod_{\theta=1}^{\tilde{l}} \mathop{\prod_{i,j=1}^{\tilde{c}}}_{y_i, y_j \in \mathcal{L}_\theta} \left(
    \frac{\tilde{K}(\tilde{y}^i;\bar{\vx})}{\tilde{K}(\tilde{y}^j;\bar{\vx})} \right)^{c_{ij}}.
\end{equation}
By hypothesis, the kinetic adjustment factor $\tilde{K}(y,y')$
does not depend on any rate constant $k_r$ corresponding to a reaction $r \in \mathcal{R}_I$. Since the only state-dependent rates in \eqref{eq:ratio_improper} are those corresponding to reactions $r \in \mathcal{R}_I$, we have that the ratio \eqref{eq:ratio_improper} coincides with the kinetic adjustment factor $\tilde{K}(y,y')$ defined by \ref{def:kin_adj_factor}. It follows that and $y$ and $y'$ have a robust ratio in $\M$ and, specifically, that $\mathbf{x}^{y}/\mathbf{x}^{y'} = \tilde{K}(y,y')$ at every positive steady state $\mathbf{x} \in \mathbb{R}_{> 0}^n$.\\

\noindent \emph{(b)} Now suppose that the conditions of 2 hold for all pairs of complexes $y, y'$ such that $g(y)=g(y')$. Using~\eqref{eq:lin_comb} with $y'=h(g(y))$, we can write
\[y - h(g(y)) = \sum_{\theta=1}^{\tilde{l}} \mathop{\sum_{i,j=1}^{\tilde{c}}}_{\tilde{y}^i, \tilde{y}^j \in \tilde{\mathcal{L}}_\theta}c_{ij} (h(\tilde{y}^{i}) - h(\tilde{y}^{j})).\]
Let $\bar{\vx} \in \mathbb{R}_{> 0}^n$ be a positive steady state of $\M$. From part (a), we have that $\bar{\vx}^y/\bar{\vx}^{h(g(y))} = \tilde{K}(y,h(g(y)))$. 
Furthermore, by \eqref{eq:sseqs}, we have that
\begin{equation}\label{eq:ssMtM}
  \begin{aligned}
  \mathbf{0} =& \sum_{\substack{r: y \rightarrow y' \in \mathcal{R} \\ r\notin \R_I}} k_r (y' - y) \; \bar{\vx}^{y} + \sum_{\substack{r: y \rightarrow y' \in \mathcal{R} \\ r\in \R_I}} k_r(\bar{\vx}) (y' - y) \; \bar{\vx}^{h(g(y))}\\
             = & \sum_{\substack{r: y \rightarrow y' \in \mathcal{R} \\ r\notin \R_I}} k_r (y' - y) \; \bar{\vx}^{y} + \sum_{\substack{r: y \rightarrow y' \in \mathcal{R} \\ r\in \R_I}} \left[\tilde{K}(y,h(g(y)))k_r\right] (y' - y) \; \bar{\vx}^{h(g(y))}.
  \end{aligned}
\end{equation}
It follows that $\bar{\vx}$ is a steady state of $\tM$.
Conversely, if $\bar{\vx} \in \mathbb{R}_{> 0}^n$ is a positive steady state of $\tM$, since $\bar{\vx}^{y}/\bar{\vx}^{h(g(y))} = \tilde{K}(y,h(g(y)))$, from~\eqref{eq:ssMtM} we can conclude that $\bar{\vx}$ is also a positive steady state for $\M$. It follows that $\M$ and $\tM$ are steady state equivalent, and consequently the translation is resolvable.
  {\hfill $\square$}

\section{Details of the Mixed-Integer Linear Programming code}\label{app:code}

In this appendix, we describe in more detail the method sketched in Section~\ref{sec:translation_algo}.
We first fix some notations.
We define $\mathbb{B} = \{0,1\}$, and denote by $\mI^+$ and $\mI^-$ the positive and negative part of the incidence matrix, respectively, and set $\mGamma^+=\mY\mI^+$, $\mGamma^-=\mY\mI^-\in \mathbb{N}^{n \times m}$.
In other words, the columns of $\mGamma^+$ and $\mGamma^-$ contain the stoichiometric coefficients of the product and reactant complexes of the network, respectively.
Given $\alpha$ cycles and $\beta$ stoichiometric elementary modes, we denote by $I^h \subseteq \{1,\dots,m\}$,
$h=1,\dots,\alpha$, the indices of reactions in the cyclic elementary modes, and by $J^h \subseteq \{1,\dots,m\}$, $h=1,\dots,\beta$, the indices of reactions in the stoichiometric elementary modes.\\

\noindent \emph{Translation complexes are well defined:} We define a matrix of decision variables $\mUpsilon \in \mathbb{R}^{n \times m}$ where the $j^{th}$ column of $\mUpsilon$ corresponds to the translation complex $\upsilon^j$ of the reaction $r_j$, i.e. $\mUpsilon_{\cdot j} = \upsilon^j$.
Since each complex in the original network can be translated to only one
stoichiometric complex in the translated network, any two reactions $r_{j_1}$ and $r_{j_2}$ that have the same reactant need to be translated by the same translation complex:
\begin{equation}\label{c:same_reactant}\tag{React}
  \mGamma^-_{i j_1}=\mGamma^-_{i j_2} \mbox{ for all } i=1,\dots,n \Rightarrow \mUpsilon_{ij_1} = \mUpsilon_{ij_2} \mbox{ for all } i=1,\dots,n.
\end{equation}
We can optionally require that all the translation complexes are positive, by imposing
\begin{equation}\label{c:pos-t}\tag{PosT}
  \mUpsilon_{ij}\geq 0, i=1,\dots,n, j=1,\dots,m.
\end{equation}
If we do not impose that the translation complexes have positive coefficients,
we need to guarantee that all the coefficients of the complexes resulting from the translations
are positive. To this end, we require
\begin{equation}\label{c:pos-c}\tag{PosC}
  \mUpsilon_{ij}+\mGamma^-_{ij}\geq 0, \mUpsilon_{ij}+\mGamma^+_{ij}\geq 0, i=1,\dots,n, j=1,\dots,m.
\end{equation}

\noindent \emph{Stoichiometric elementary modes translated to cycles:}
Suppose now that the reactions $r_{j_1}, \dots, r_{j_k}$ define a cycle.
To ensure that these reactions form a cycle in the translated network as well, we impose
\begin{equation}\label{c:cycle}\tag{Cycle}
  \mUpsilon_{ij_1} = \mUpsilon_{ij_2} = \cdots = \mUpsilon_{ij_k}, \ i=1,\dots,n.
\end{equation}
Consider now the stoichiometric elementary modes. Suppose that the reactions $r_{j_1}, \dots, r_{j_k}$ define the $h^{th}$ stoichiometric elementary mode, and $\hat{\upsilon}^{j_1}, \dots, \hat{\upsilon}^{j_k}$ are complexes such that the reactions $r_{j_1} + \hat{\upsilon}^{j_1}, \dots, r_{j_k} + \hat{\upsilon}^{j_k}$ define a cycle by Lemma \ref{lem:ems_to_cycles}, for a fixed order of the reactions. We denote by $\hat{\mUpsilon}^h \in \mathbb{Z}^{n \times |J^h|}$ the matrix with columns the complexes $\hat{\upsilon}^{j_1},\dots,\hat{\upsilon}^{j_k}$.
Write $j'$ for the position of the index $j$ in $J^h$, i.e., $j=J^h_{j'}$.
Then to guarantee that $r_{j_1}, \dots, r_{j_k}$ are translated to the identified cycle we impose
\begin{equation}\label{c:stoich}\tag{Stoich}
\mUpsilon_{ij_1} - \hat{\mUpsilon}^h_{ij'_1} = \mUpsilon_{ij_2} - \hat{\mUpsilon}^h_{ij'_2} = \cdots = \mUpsilon_{ij_k} - \hat{\mUpsilon}^h_{ij'_k}, \mbox{ for } h = 1 , \ldots, \beta, i = 1, \dots, n.
\end{equation}
For each stoichiometric elementary mode we have therefore a set of constraints of the form~\eqref{c:stoich}.
These constraints might not be satisfiable at the same time for all stoichiometric elementary modes; hence we want to maximize the number of
stoichiometric elementary modes for which these constraints are verified. We do so by introducing additional variables.\\

\noindent \emph{Minimize stoichiometric modes in translated network:}
For the $h^{th}$ stoichiometric elementary mode, we introduce a binary variable $\sigma_h$ that will be equal to $1$ if the elementary mode is not translated to a cycle. 
The restrictions on $\sigma_h$ are obtained by imposing, for each pair of indices $j_1,j_2\in J^h$ and for each species $X_i$
the constraint
\begin{equation}\label{c:count}\tag{Count}
  \sigma_h \geq \varepsilon (\mUpsilon_{ij_1} - \hat{\mUpsilon}^h_{ij'_1} - \mUpsilon_{ij_2} + \hat{\mUpsilon}^h_{ij'_2}).
\end{equation}
Notice that \eqref{c:count} reduces to \eqref{c:stoich} if $\sigma_h = 0$. In order to maximize the number of stoichiometric modes which are translated to cycles, we introduce the following objective function:
\begin{equation}\label{obj:max_transl}\tag{Obj}
\mathrm{minimize} \; \; \sum_{h=1}^{\beta} \sigma_h.
\end{equation}
\vspace{0.05in}

\noindent \emph{Permutations of stoichiometric modes (optional):} As observed in Remark~\ref{rmk:order_reacts}, we can identify a possible cycle for each possible order of the reactions involved in the elementary mode.
In general, the existence of a solution can depend on the choice of the order. We take the order of the reactions into account by introducing variables and constraints to keep track of the possible permutations of the reactions in the elementary mode.
In addition, since the order of reactions in the elementary mode is not fixed, instead of calculating the translations that convert the elementary mode to a cycle,
we consider the matrices $\hat{\mUpsilon}^h\in \mathbb{Z}^{n \times |J^h|}$ as matrices of decision variables, and impose the constraint \eqref{c:count} as done previously in the
case of elementary modes with a fixed order.

To keep all the possible orders into account, for the $h^{th}$ stoichiometric elementary mode, with set of reaction indices $J^h$, we define $|J^h| \times |J^h|$ binary
variables $\mP^h \in \mathbb{B}^{|J^h| \times |J^h|}$, that will identify the position of
each reaction in the possible orders: $\mP^h_{tj}$ will be equal to $1$ if and only
if the $j^{th}$ reaction is in position $t$.

We have therefore the following constraints. To ensure that each reaction is assigned one and only one position, we impose that each row and column of $\mP^h$ sums to $1$:
\begin{equation}\label{c:perm1}\tag{Perm1}
	\sum_{j=1}^k \mP^h_{tj} = 1 \ \text{for all } t = 1, \dots, |J^h|, \\
\end{equation}
\begin{equation}\label{c:perm2}\tag{Perm2}
	\sum_{t=1}^k \mP^h_{tj} = 1 \ \text{for all } j = 1, \dots, |J^h|.
\end{equation}

Now, suppose that $j'_1$, $j'_2$, $t$ and $t'$ are indices in $\{1,\dots,|J^h|\}$ such that $t'=t+1\Mod{|J^h|}$.
Write $j_1$, $j_2$ for $J^h_{j'_1}$ and $J^h_{j'_2}$, respectively.
If $\mP^h_{tj'_1}$ and $\mP^h_{tj'_2}$ are both equal to $1$,
then reaction $r_{j_1}$ is followed by reaction $r_{j_2}$ in the cycle.
If this is the case, we want to impose that the product of reaction $r_{j_1}$ is equal to the reactant of reaction $r_{j_2}$.
For each pair of indices $J^h_{j'_1}$, $J^h_{j'_2}$ in the elementary mode we therefore add the following constraints, for each species $i=1,\dots,n$:
\begin{equation}\label{c:perm3}\tag{Perm3}
   -\frac{1}{\varepsilon} (2 - \mP^h_{tj'_1} - \mP^h_{t'j'_2}) \leq \mGamma^+_{ij_1}-\mGamma^-_{ij_2} + \hat{\mUpsilon}^h_{ij'_1}-\hat{\mUpsilon}^h_{ij'_2}\leq \frac{1}{\varepsilon} (2 - \mP^h_{tj'_1} - \mP^h_{t'j'_2}), \ j' = j+1\Mod{|J^h|}.\\
\end{equation}
We can also fix the position of one reaction arbitrarily, and set
\begin{equation}\label{c:perm4}\tag{Perm4}
	\mP^h_{00}= 1 \ \text{for all } h = 1, \dots, \beta. \\
\end{equation}
\vspace{0.05in}

\noindent \emph{Proper translations (optional):} We might sometimes be interested in identifying a proper translation with
minimum deficiency.
To impose that distinct complexes in the original network are translated to
distinct complexes in the translated network, we introduce $n \times m \times m$
binary variables $\mU_{ij_1j_2}$.
The variable $\mU_{ij_1j_2}$ encodes whether the reactant complexes of reactions $r_{j_1}$ and $r_{j_2}$ have been
translated to the same complex with respect to species $X_i$, i.e.
\[|\mGamma^-_{ij_1} + \mUpsilon_{ij_1} - \mGamma^-_{ij_2} - \mUpsilon_{ij_2}| = 0 \ \Rightarrow \ \mU_{ij_1j_2} = 1.\]
This is achieved by introducing additional $n \times m \times m$ auxiliary binary
variables $\mV_{ij_1j_2}$, and considering the constraints
\begin{equation}\label{c:proper1}\tag{Proper1}
\left\{ \begin{split} \mGamma^-_{ij_1} + \mUpsilon_{ij_1} - \mGamma^-_{ij_2} - \mUpsilon_{ij_2} & \geq \epsilon (1 - \mU_{ij_1j_2}) - M \mV_{ij_1j_2}, \\ \mGamma^-_{ij_1} + \mUpsilon_{ij_1} - \mGamma^-_{ij_2} - \mUpsilon_{ij_2} &\leq - \epsilon (1 - \mU_{ij_1j_2}) + M(1-\mV_{ij_1j_2}),\end{split} \right.
\end{equation}
as can be easily checked by considering the four possible cases.
Finally, we have to impose that, if the reactants of reactions $r_{j_1}$ and $r_{j_2}$ differ, then
their corresponding translated complexes differ in at least one species. Since the variables
$\mU_{ij_1j_2}$ count the number of matching species, we impose, for $j_1,j_2=1,\dots,m$
\begin{equation}\label{c:proper2}\tag{Proper2}
\mGamma^-_{ij_1} \neq \mGamma^-_{ij_2} \text{ for any } i=1,\dots,n \ \Rightarrow \sum_{i=1}^{n} \mU_{ij_1j_2} \leq n - 1.
\end{equation}
The variables $\mU_{ij_1j_2}$ and $\mV_{ij_1j_2}$ corresponding to reactions $r_{j_1}$ and $r_{j_2}$ with the same reactant can be omitted.
The parameters, variables and constraints of the problem are summarized in the following tables.

\subsubsection*{Parameters}
\begin{tabular}{lp{12cm}}
    $n \in \mathbb{N}$ & number of species \\
    $c \in \mathbb{N}$ & number of complexes \\
    $m \in \mathbb{N}$ & number of reactions \\
    $\mGamma \in \mathbb{N}^{n \times m}$ & stoichiometric matrix \\
    $\mGamma^- \in \mathbb{N}^{n \times m}$ & stoichiometric coefficients of the reactants \\
    $\mGamma^+ \in \mathbb{N}^{n \times m}$ & stoichiometric coefficients of the products \\
    $\alpha \in \mathbb{N}$ & number of cyclic elementary modes \\
    $\beta \in \mathbb{N}$ & number of stoichiometric elementary modes \\
    $I^h \subseteq \{1,\dots,m\}$, $h=1,\dots,\alpha$ & indices of reactions in cycles \\
    $J^h \subseteq \{1,\dots,m\}$, $h=1,\dots,\beta$ & indices of reactions in stoichiometric elementary modes \\
    $M \in \mathbb{R}_{>0}$ & $M \gg 1$ \\
    $\epsilon \in \mathbb{R}_{>0}$ & $0 < \epsilon \ll 1$ \\
\end{tabular}

\subsubsection*{Decision Variables}
{\renewcommand{\arraystretch}{1.2}
\begin{tabular}{lp{12cm}}
  $\mUpsilon \in \mathbb{R}^{n \times m}$ &
  matrix of translation complexes: $\mUpsilon_{ij} = \upsilon^j_i$ is the stoichiometric coefficient of species $X_i$ in the translation of the reaction $r_j$\\
  $\hat{\mUpsilon}^h \in \mathbb{Z}^{n \times |J^h|}$, $h=1,\dots,\beta$ &
  matrix of translation complexes which convert elementary mode $h$ to cycle: $\hat{\mUpsilon}^h_{ij} = \hat{\upsilon}^j_i$ is the stoichiometric coefficient of species $X_i$ in the translation of the $j^{th}$ reaction in the $h^{th}$ elementary mode to a cycle. $\hat{\mUpsilon}^h$ is a matrix of variables if permutations of reactions are considered, and is otherwise calculated using Lemma~\ref{lem:ems_to_cycles}. \\
  $\mP^h \in \mathbb{B}^{|J^h| \times |J^h|}$, $h=1,\dots,\beta$ &
  orders of reactions in elementary mode $h$: $\mP^h_{tj}=1$ iff the $j^{th}$ reaction in the $h^{th}$ stoichiometric elementary mode is in position $t$\\
  $\sigma \in \mathbb{B}^{\beta}$ &
  count the elementary modes that are translated to cycles: $\sigma_h=1$ iff the $h^{th}$ stoichiometric elementary mode is translated to a cycle\\
  $\mU \in \mathbb{B}^{n\times m\times m}$ &
  (optional for proper) count the number of matching species in the translation of the reactants:
  $\mU_{ij_1j_2}=1$ iff the translations of the reactants of reactions $r_{j_1}$ and $r_{j_2}$ are equal in species $i$ \\
    $\mV \in \mathbb{B}^{n\times m\times m}$ & (optional for proper) auxiliary variables \\
\end{tabular}
}

\subsubsection*{Objective}
{\renewcommand{\arraystretch}{1.4}
\begin{tabular}{lp{12cm}}
$\mathrm{minimize} \ \sum_{h=1}^{\beta}\sigma_{h}$ & Minimize stoichiometric modes in translated network \textbf{(\hypertarget{obj:max_transl}{Obj})}\\
\end{tabular}
}

\subsubsection*{Constraints}

{\renewcommand{\arraystretch}{1.4}
  \begin{align*}
&\left\{\begin{tabular}{p{10cm}p{7cm}}
  $\mGamma^-_{i j_1}=\mGamma^-_{i j_2}, \forall i=1,\dots,n \Rightarrow \mUpsilon_{ij_1}=\mUpsilon_{ij_2}, \forall i=1,\dots,n$, & \multirow{2}{7cm}{reactions with the same reactant are translated by the same complex \textbf{(\hypertarget{c:same_reactant}{React})}} \\
  $j_1, j_2=1,\dots,m, j_1\neq j_2$ & 
\end{tabular}\right.\\
&\left\{\begin{tabular}{p{10cm}p{7cm}}
  $\mUpsilon_{ij}\geq 0$, $i=1,\dots,n$, $j=1,\dots,m$& translations are positive (optional) \textbf{(\hypertarget{c:pos-t}{PosT})}\\
    $\mUpsilon_{ij}+\mGamma^-_{ij}\geq 0$, $\mUpsilon_{ij}+\mGamma^+_{ij}\geq 0$, $i=1,\dots,n$, $j=1,\dots,m$&
    all resulting stoichiometric coefficients are positive \textbf{(\hypertarget{c:pos-c}{PosC})}\\
\end{tabular}\right.\\
&\left\{\begin{tabular}{p{10cm}p{7cm}}
  $\mUpsilon_{ij_1}=\mUpsilon_{ij_2}$, $i=1,\dots,n$, & \\
  $j_1=I^h_{j}$, $j_2=I^h_{j+1}$, $j=1,\dots,|I^h|-1$, & cycles are preserved \textbf{(\hypertarget{c:cycle}{Cycle})} \\
  $h=1,\dots, \alpha$ &
\end{tabular}\right.\\
&\left\{\begin{tabular}{p{10cm}p{7cm}}
    $\left\{\begin{array}{l}
    \sigma_h \geq \varepsilon (\mUpsilon_{ij_1} - \hat{\mUpsilon}^h_{ij'_1} - \mUpsilon_{ij_2} + \hat{\mUpsilon}^h_{ij'_2}),\\
     i=1,\dots,n, j_1=J_{j'_1}^h, j_2=J_{j'_2}^h,\\
    j'_1=1,\dots,|J^h|, j'_2=1,\dots,|J^h|, j'_1\neq j'_2\\
    \end{array}\right.$ & \vspace{-25pt}$\sigma_h=1$ if the $h^{th}$ stoichiometric elementary mode is not translated to a cycle \textbf{(\hypertarget{c:count}{Count})}\\
  $\displaystyle{\sum_{j=1}^{|J^h|}}\mP_{tj}^h=1$, $t=1,\dots,|J^h|$ & each index in the order corresponds to only one reaction \textbf{(\hypertarget{c:perm1}{Perm1})}\\
  $\displaystyle{\sum_{t=1}^{|J^h|}}\mP_{tj}^h=1$, $j=1,\dots,|J^h|$ & each reaction is assigned only one index in the order \textbf{(\hypertarget{c:perm2}{Perm2})}\\
    $\left\{\begin{array}{l}
    \mGamma^+_{ij_1}-\mGamma^-_{ij_2} + \hat{\mUpsilon}^h_{ij'_1}-\hat{\mUpsilon}^h_{ij'_2}\leq \frac{1}{\varepsilon} (2 - \mP^h_{tj'_1} - \mP^h_{t'j'_2}),\\
    \mGamma^+_{ij_1}-\mGamma^-_{ij_2} + \hat{\mUpsilon}^h_{ij'_1}-\hat{\mUpsilon}^h_{ij'_2}\geq -\frac{1}{\varepsilon} (2 - \mP^h_{tj'_1} - \mP^h_{t'j'_2}),\\
    t=1,\dots,|J^h|,t' = t+1\Mod{|J^h|},\\
     i=1,\dots,n, j_1=J^h_{j'_1}, j_2=J^h_{j'_2},\\
    j'_1=1,\dots,|J^h|, j'_2=1,\dots,|J^h|, j'_1\neq j'_2
    \end{array}\right.$ &
    \vspace{-20pt}
  the product of the reaction with index $t$ is aligned with the reactant of reaction with index $t+1\Mod{|J^h|}$ \textbf{(\hypertarget{c:perm3}{Perm3})}\\
  $\mP^h_{00}=1$ & first reaction is in first position \textbf{(\hypertarget{c:perm4}{Perm4})}\\
  $h=1,\dots,\beta$ &
\end{tabular}\right.\\
&\left\{\begin{tabular}{p{10cm}p{7cm}}
  $\left\{\begin{array}{l}
  \mGamma^-_{ij_1} + \mUpsilon_{ij_1} - \mGamma^-_{ij_2} - \mUpsilon_{ij_2} \geq \epsilon (1 - \mU_{ij_1j_2}) - M \mV_{ij_1j_2},\\
  \mGamma^-_{ij_1} + \mUpsilon_{ij_1} - \mGamma^-_{ij_2} - \mUpsilon_{ij_2} \leq - \epsilon (1 - \mU_{ij_1j_2}) + M(1-\mV_{ij_1j_2}),\\
  i=1,\dots,n
  \end{array}\right.$
  & {proper translation (optional) \textbf{(\hypertarget{c:proper1}{Proper1})}}\\
  $\mGamma^-_{ij_1} \neq \mGamma^-_{ij_2} \text{ for any } i=1,\dots,n \ \Rightarrow \sum_{i=1}^{n} \mU_{ij_1j_2} \leq n - 1$ & \textbf{(\hypertarget{c:proper2}{Proper2})}\\
  $j_1,j_2=1,\dots,m$. &
\end{tabular}\right.
 \end{align*}}
\end{document}